\documentclass[12pt]{article}
\usepackage{amsmath,amsthm,amssymb,amsfonts,mathrsfs,multicol}
\newtheorem{theorem}{Theorem}[section]
\newtheorem{lemma}[theorem]{Lemma}
\newtheorem{proposition}[theorem]{Proposition}

\newtheorem{definition}[theorem]{Definition}

=22cm\headheight=0cm\topskip=0cm\topmargin=0cm

\def\Rn{\mathbb R}
\def\Sc{\mathcal S}
\def\v{\mathbf{v}}
\def\0{\sf 0}

\makeatletter
\def\dotminussym#1#2{%
  \setbox0=\hbox{$\m@th#1-$}%
  \kern.5\wd0%
  \hbox to 0pt{\hss\hbox{$\m@th#1-$}\hss}%
  \raise.8\ht0\hbox to 0pt{\hss$\m@th#1.$\hss}%
  \kern.5\wd0}

\newcommand{\bunderline}[1]{\underline{#1\mkern-4mu}\mkern4mu }

\begin{document}

\begin{center}
{\Large\sc Affine modal propositional logic} 
\bigskip

{\bf Hafez K. D.\\ Seyed-Mohammad Bagheri\footnote{Corresponding author}}

\vspace{3mm}

{\footnotesize Department of Pure Mathematics, Faculty of Mathematical Sciences,\\
Tarbiat Modares University, Tehran, Iran, P.O. Box 14115-134\\
hafez.kamrani@modares.ac.ir,\ \  bagheri@modares.ac.ir}
\end{center}

\begin{abstract}
Topological semantics for affine modal propositional logic is introduced.
The interior operator on subsets is replaced with the lower semi-continuous envelope operator
on functions. Completeness and affine compactness theorems are proved for this logic.
\end{abstract}

{\sc Keywords}: {\small Affine, modal logic, semi-continuous}

{\small {\sc AMS subject classification:}} 03B45, 03B50
\bigskip

The topological semantics for propositional modal logic has a rich history,
actually predating the relational semantics primarily attributed to Kripke (see
\cite{Goldblatt}). Mckinsey and Tarski first established its completeness with respect
to S4 and demonstrated its decidability in their classical paper \cite{Tarski}.
In modern terms, a topological model is a tuple $M=(X,\v)$, where $X$ is a
topological space and $\v$ a valuation assigning a subset of $X$ to every
formula. The interpretations are straightforward: for a formula $\phi$ and
$x\in X$ we have $X\vDash_x\phi$ if and only if $x\in\v(\phi)$. The modal
operator $\Box$ is then interpreted simply as interior operator, i.e. $\v(\Box\phi)=\operatorname{Int}(\v(\phi))$
(See \cite{vanBenthem}).

While a Kripke semantics has been established for propositional and first-order
modal continuous logics in \cite{baratella_1,baratella_2}, this paper aims to develop topological
semantics for the propositional case, focusing specifically on its affine fragment. Recall that continuous logic \cite{ben}
is a generalization of classical logic, replacing the binary value space $\{0,1\}$ with the real line $\Rn$.
It enjoys the familiar system of connectives $\{+,r\cdot,\wedge,\vee,1\}_{r\in\Rn}$.
Affine continuous logic \cite{bagheri} is a weakening of continuous logic obtained by removing the non-linear
connectives (i.e., $\wedge,\vee$). Usual tools and technics in full continuous logic find affine counterparts in this fragment.

In topological semantics for continuous logic subsets are replaced with functions.
To each atomic proposition $p$ is assigned a function $\v_p(x):X\rightarrow[0,1]$.
The operator $\Box$ is then interpreted as the lower semi-continuous envelope of functions.
This operation coincides with the interior operation if we identify every subsets of $X$ with the corresponding characteristic function.
In this way, the functional semantics generalizes the classical one. We prove an appropriate completeness theorem for this semantics.
Section \ref{intro} reviews the preliminary definitions and properties of
semi-continuous functions. Section \ref{sem} introduces the functional topological semantics.
Section \ref{proof} presents the proof system and establishes the necessary tools. Section
\ref{complete} is dedicated to showing the completeness of the proof system with respect to topological semantics
using the canonical model. In Section \ref{compact} affine compactness theorem is proved
using ultramean construction. We also observe that canonical model used in the completeness proof is compact convex.

\section{Introduction}\label{intro}
All real valued functions on topological spaces are assumed to be bounded.
Let $X$ be a topological space. A function $f:X\rightarrow\Rn$ is \emph{lower semi-continuous} (l.s.c.) if
for each $r$, $f^{-1}(r,\infty)$ is open. It is \emph{upper semi-continuous} (u.s.c.) if for each $r$,
$f^{-1}(-\infty,r)$ is open.
The pointwise supremum of an arbitrary family of lower semi-continuous functions (if defined) is lower semi-continuous.
Similarly, the pointwise infimum of any family of upper semi-continuous functions is upper semi-continuous \cite{Aliprantis-Inf}.
The lower and upper semi-continuous envelopes of $f:X\rightarrow\Rn$ are respectively defined by
$$\bunderline{f}(x)=\sup\{h(x): \ f\geqslant h\ \mbox{is\ l.s.c.}\}.$$
$$\bar{f}(x)=\inf\{h(x): \ f\leqslant h\ \mbox{is\ u.s.c.}\}.$$
Then, $\bunderline{f}$ is lower semi-continuous and $\bar{f}$ is upper semi-continuous.
It can be also proved that $$\bunderline{f}(x)=\liminf_{\ t\rightarrow x}f(t)=\sup\big\{\inf_{t\in U}f(t):\ U\ni x\ \mbox{is\ open}\big\}$$
$$\bar{f}(x)=\limsup_{\ t\rightarrow x}f(t)=\inf\big\{\sup_{t\in U}f(t):\ U\ni x\ \mbox{is\ open}\big\}.$$
One verifies that $\bar{f}=-(\bunderline{-f})$ and that if $f,g$ are lower (resp. upper) semi-continuous, then so are
$f+g$, $f\cdot g$, $f\vee g$ and $f\wedge g$.
The following equalities and inequalities are verified easily:

\begin{multicols}{3}
	\begin{itemize}
		\item $\bunderline{f}+\bunderline{g}\leqslant\bunderline{f+g}$
		
		\item $\overline{f+g}\leqslant\bar{f}+\bar{g}$
		\vspace{1mm}
		
		\item $\bunderline{f}\vee\bunderline{g}\leqslant\underline{f\vee g}$
		\vspace{1mm}
		
		\item $\overline{f\vee g}=\bar{f}\vee\bar{g}$.
		\vspace{1mm}
		
		\item $\overline{f\wedge g}\leqslant\bar{f}\wedge\bar{g}$
		\vspace{1mm}
		
		\item $\underline{f\wedge g}=\bunderline{f}\wedge\bunderline{g}$.
	\end{itemize}
\end{multicols}

The indicator function of $A\subseteq X$ is defined by $1_{A}(x)=1$ if $x\in A$ and $1_A(x)=0$ otherwise.
Then, $1_A$ is lower (resp. upper) semi-continuous if and only if $A$ is open (resp. closed).
Generally, the lower semi-continuous envelope of $1_A$ is $1_{A^{\circ}}$ and upper semi-continuous envelope
of $1_A$ is $1_{\overline A}$ where $A^{\circ}$ and $\overline A$ are the interior and closure of $A$ respectively.

\section{Semi-continuous envelope semantics}\label{sem}

Let $\mathbb P=\{p,q,...\}$ be a countable family of atomic propositions.
Compound propositions for affine modal propositional logic AML (affine propositions) are inductively defined by
$$1,\ \ \ p,\ \ \ \phi+\psi,\ \ \ r\phi,\ \ \ \Box\phi$$
where, $r\in\Rn$. Similarly, compound `non-affine' propositions are inductively defined by
$$1,\ \ \ p,\ \ \ \phi+\psi,\ \ \ r\phi,\ \ \ \Box\phi,\ \ \ \phi\wedge\psi,\ \ \ \phi\vee\psi.$$

To each proposition is assigned a bound inductively defined as follows:
\begin{multicols}{2}
\begin{itemize}
\item $\sf{b}_1=1$
\item $\sf{b}_p=1$
\item $\sf{b}_{r\phi}=|r|\sf{b}_\phi$
\item $\sf{b}_{\phi+\psi}=\sf{b}_\phi+\sf{b}_\psi$
\item $\sf{b}_{\Box\phi}=\sf{b}_\phi$
\item $\sf{b}_{\phi\wedge\psi}=\sf{b}_{\phi\vee\psi}=\max\{\sf{b}_\phi,\sf{b}_\psi\}$
\end{itemize}
\end{multicols}

\begin{definition} \emph{A \emph{topological model} (or model for short) is a pair $M=(X,\v)$ where $X$ is a
topological space and $\v$ is a valuation function assigning to every atomic proposition $p$ a map $\v_p:X\rightarrow[0,1]$.}
\end{definition}
\vspace{1mm}

For a proposition $\phi$ and model $M=(X,\v)$, \ $\phi^M(x)$ is the value of $\phi$ in $x\in X$ which is defined
by induction on the complexity of $\phi$ as follows:
\vspace{1mm}

$\bullet$\ \ for atomic proposition $p$,\ \ $p^M(x)=\v_p(x)$. Also, $1^M(x)=1$

$\bullet$\ \ $+,\ r\cdot,\ \wedge,\ \vee$ are defined in the obvious way, e.g. $(\phi+\psi)^M(x)=\phi^M(x)+\psi^M(x)$

$\bullet$\ \ $(\Box\phi)^M(x)=\bunderline{\phi^M}(x)$.
\bigskip

As an example, let $X=[0,1]^{\mathbb P}$ be endowed with the Tychonoff topology.
Then, in the evaluation model where $\v_p(x)=x(p)$, one has that $\Box\phi=\phi$ for every $\phi$.
In contrast, the integer part valuation $\v_p(x)=\lfloor x(p)\rfloor$ gives a nontrivial topological model.

We use $\lozenge\phi$ as an abbreviation for $-\Box-\phi$. It is then proved that
$$(\lozenge\phi)^M(x)=\overline{\phi^M}(x).$$
One also verifies that for every $M$ and model $\phi$,\ \ $-{\sf b}_\phi\leqslant\phi^M\leqslant{\sf b}_\phi$.
In the classical topological semantics \cite{Tarski}, $\phi^M$ is a subset of $X$ and $(\Box\phi)^M$
(resp. $(\lozenge\phi)^M$) is interpreted as the interior (resp. closure) of the set $\phi^M$.
This is a special case of the present definition if we identify $A$ with $1_A$.

An expression of the form $\phi\leqslant\psi$ is called a \emph{condition}.
Let $M=(X,\v)$ be a model and $x\in X$. Then, by $M\vDash_x\phi\leqslant\psi$ is meant $\phi^M(x)\leqslant\psi^M(x)$.
For a set $\Gamma$ of conditions, $M\vDash_x\Gamma$ has the obvious meaning.
$\Gamma$ is \emph{satisfiable} if there is a model $M=(X,\v)$ and $x\in X$ such that $M\vDash_x\Gamma$.
Finally, $\Gamma\vDash\phi\leqslant\psi$ means that for every model $M=(X,\v)$ and $x\in X$,
if $M\vDash_x\Gamma$ then $M\vDash_x\phi\leqslant\psi$.
Global variants of these notions are defined similarly.
So, for example, $M\vDash\phi\leqslant\psi$ means $M\vDash_x\phi\leqslant\psi$ for all $x\in X$
and $\Gamma$ is globally satisfiable if there is $M=(X,\v)$ such that $M\vDash_x\Gamma$ for all $x\in X$.
This is denoted by $M\vDash\Gamma$.

Although we have defined topological semantics generally, in this paper, we focus on the affine case.
So, from now on, by proposition, we always mean an affine proposition. We prove completeness and compactness
for AML. The general real valued logic has a parallel discussion.

The axiom $K$ in the classical form of modal logic is written as
$$\Box(\phi\rightarrow\psi)\rightarrow(\Box\phi\rightarrow\Box\psi).$$
The affine counterpart of axiom K is then either of the following equivalent conditions:
$$\Box(\phi-\psi)\leqslant\Box\phi-\Box\psi,\hspace{15mm} \Box\phi+\Box\psi\leqslant\Box(\phi+\psi).$$
Also, the axioms $T$ and 4 are respectively written as $\Box\phi\rightarrow\phi$ and $\Box\phi\rightarrow\Box\Box\phi$.
These axioms now take the forms $\Box\phi\leqslant\phi$ and $\Box\phi\leqslant\Box\Box\phi$ respectively.
It is clear that these conditions as well as the following ones hold in every topological model:
\vspace{1mm}

- $\Box(\phi+r)=\Box\phi+r$ 

- $\Box(r\phi)=r\Box\phi$\ \ \ \ \ \ \ \ \ \ \ \ for $0\leqslant r$ 

- $\Box(r\phi)=r\lozenge\phi$\ \ \ \ \ \ \ \ \ \ \ \ for $r\leqslant0$ 
\bigskip

Also, the classical modus ponens and necessitation rules are respectively as follows:
$$\frac{\phi,\ \phi\rightarrow\psi}{\psi},\hspace{25mm} \frac{\phi}{\Box\phi}.$$
These rules take the following forms in AML respectively:
$$\frac{\phi\leqslant\psi,\ \psi\leqslant\theta}{\phi\leqslant\theta},
\hspace{25mm} \frac{\phi\leqslant\psi}{\Box\phi\leqslant\Box\psi}.$$
Following the classical case, we define the affine variant of S4 as K+T+4 above.
So, by the necessitation rule, affine S4 implies that $\Box\Box\phi=\Box\phi$
whose interpretation is that the lower envelope of a lower semi-continuous function is the function itself.
Similarly, affine S5 can be defined as affine S4 plus the axiom $\phi\leqslant\Box\lozenge\phi$
(which is the affine counterpart of the axiom B stated by $\phi\rightarrow\Box\lozenge\phi$).

\section{Proof system}\label{proof}

By $\phi=\psi$ is meant $\{\phi\leqslant\psi, \psi\leqslant\phi\}$.
The affine variant of S4 (denoted by AS4) is defined by the following axioms and rules.
\bigskip

\noindent{\bf Logical axioms}:
\begin{multicols}{2}
	\begin{enumerate}
		\item[(A1)] $r\leqslant s$ \hspace{10mm} (if\ $\Rn\vDash r\leqslant s$)
		
		\item[(A2)] $\phi+(\psi+\theta)=(\phi+\psi)+\theta$
		
		\item[(A3)] $\phi+\psi=\psi+\phi$
		
		\item[(A4)] $0+\phi=\phi$
		
		\item[(A5)] $r(\phi+\psi)=r\phi+r\psi$
		
		\item[(A6)] $(r+s)\phi=r\phi+s\phi$
		
		\item[(A7)] $r(s\phi)=(rs)\phi$
		
		\item[(A8)] $1\phi=\phi$
		
		\item[(A9)] $0\phi=0$
		
		\item[(A10)] $0\leqslant p\leqslant1$ \hspace{4mm} $(p\in\mathbb{P}$)
		
		\item[(A11)] $\Box(\phi-\psi)\leqslant\Box\phi-\Box\psi$  \hspace{6mm} K
		
		\item[(A12)] $\Box\phi\leqslant\phi$\hspace{33mm} T
		
		\item[(A13)] $\Box\phi\leqslant\Box\Box\phi$\hspace{29mm} 4
		
		\item[(A14)] $\Box r\phi=r\Box\phi$ \hspace{23mm} ($r\geqslant0$)
		
		\item[(A15)] $\Box r=r$
	\end{enumerate}
\end{multicols}

\noindent{\bf Inference rules}:
\begin{multicols}{2}
	\begin{enumerate}
		\item[(R1)] $\dfrac{\phi\leqslant\psi}{\phi+\theta\leqslant\psi+\theta}$
		\item[(R2)] $\dfrac{0\leqslant r,\ \phi\leqslant\psi}{r\phi\leqslant r\psi}$
		\item[(R3)] $\dfrac{\phi\leqslant\psi,\ \psi\leqslant\theta}{\phi\leqslant\theta}$ \hspace{10mm} MP
		\item[(R4)] $\dfrac{\phi\leqslant\psi}{\Box\phi\leqslant\Box\psi}$\hspace{20mm} N
	\end{enumerate}
\end{multicols}
\bigskip

Below, $\Sc,\Sc_1,...$ denote conditions. If $\Sc$ is $\phi\leqslant\psi$, by $\Sc^\epsilon$ is meant $\phi\leqslant\psi+\epsilon$.
We often write $\Gamma,\Sc$ for $\Gamma\cup\{\Sc\}$.

\begin{definition} \label{proofdfn}
{\em We write $\Vdash_0\Sc$ if $\Sc$ is a logical axiom.
Suppose that $\Vdash_k$ has been defined for every $k<n$.
Then, we write $\Vdash_n\Sc$ if one of the following requirements is satisfied:
\begin{quote}

$\bullet$ $\Vdash_k\Sc$ for some $k<n$;

$\bullet$ there are $k<n$ and $\Sc_1,\Sc_2$ such that $\Vdash_k\Sc_1$,\ \ $\Vdash_k\Sc_2$
and $\frac{\Sc_1, \Sc_2}{\Sc}$ is an instance of the logical rules (R1)-(R4).

\end{quote}}
\end{definition}

We write $\Vdash\Sc$ if $\Vdash_k\Sc$ for some $k$.

\begin{definition} \label{proofdfn}
{\em We write $\Gamma\vdash_0\Sc$ if either $\Vdash\Sc$ or $\Sc\in\Gamma$.
Suppose that $\vdash_k$ has been defined for every $k<n$.
Then, we write $\Gamma\vdash_n\Sc$ if one of the following holds:
\begin{quote}

$\bullet$ $\Gamma\vdash_k\Sc$ for some $k<n$;

$\bullet$ there are $k<n$ and $\Sc_1,\Sc_2$ such that $\Gamma\vdash_k\Sc_1,\Sc_2$
and $\frac{\Sc_1, \Sc_2}{\Sc}$ is an instance of the logical rules (R1)-(R3).


\end{quote}}
\end{definition}

\begin{proposition} \label{soundness} \emph{(Soundness)}
If\ \ $\Vdash\Sc$, then $\vDash\Sc$.
\end{proposition}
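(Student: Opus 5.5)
The proof goes by induction on $n$ in the definition of $\Vdash_n\Sc$. We show that for every $n$, every model $M=(X,\v)$ and every $x\in X$, if $\Vdash_n\Sc$ then $M\vDash_x\Sc$. The base case $n=0$ checks that each logical axiom (A1)--(A15) holds pointwise in every topological model. The inductive step checks that each rule (R1)--(R4) preserves \emph{global} truth in a fixed model, i.e.\ if the premises hold at every point of $X$ then so does the conclusion. The induction must carry global validity in each model, not merely truth at a point, because of (R4).

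\textbf{The axioms.} (A1)--(A9) are identities or inequalities of real arithmetic evaluated pointwise, since $+$ and $r\cdot$ are interpreted pointwise. (A10) holds because $\v_p$ takes values in $[0,1]$. For the modal axioms we use the facts about envelopes from Section~\ref{intro}.

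(A11): by superadditivity $\bunderline{f}+\bunderline{g}\leqslant\bunderline{f+g}$ with $f=\phi^M-\psi^M$ and $g=\psi^M$, we get $\bunderline{\phi^M-\psi^M}\leqslant\bunderline{\phi^M}-\bunderline{\psi^M}$.

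(A12): $\bunderline{f}\leqslant f$, because every l.s.c.\ $h\leqslant f$ satisfies $h\leqslant f$ pointwise.

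(A13): $\bunderline{f}$ is itself l.s.c.\ and $\bunderline{f}\leqslant\bunderline{f}$, so it is among the competitors in the supremum defining $\bunderline{\bunderline{f}}$. Hence $\bunderline{f}\leqslant\bunderline{\bunderline{f}}$.

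(A14): for $r>0$, the map $h\mapsto rh$ is a bijection between the l.s.c.\ minorants of $f$ and those of $rf$, so $\bunderline{rf}=r\bunderline{f}$. For $r=0$ both sides are $0$, using (A15).

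(A15): a constant function is l.s.c., so it is its own envelope. Alternatively, this follows from the formula $\bunderline{f}(x)=\sup_U\inf_{t\in U}f(t)$.

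\textbf{The rules.} For (R1)--(R3), suppose the premises hold at every point of $X$. Then the conclusion holds at every point by ordinary order properties of $\Rn$: adding a common term preserves $\leqslant$, multiplying by $r\geqslant0$ preserves $\leqslant$, and $\leqslant$ is transitive. For (R2), note that the premise $0\leqslant r$ is a condition between constants, so by the base case or induction it is true in $\Rn$.

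For (R4), suppose $\phi^M\leqslant\psi^M$ on all of $X$. Then every l.s.c.\ $h\leqslant\phi^M$ also satisfies $h\leqslant\psi^M$, so $\bunderline{\phi^M}\leqslant\bunderline{\psi^M}$ everywhere, which is the conclusion.

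\textbf{Main obstacle.} The only delicate point is (R4): necessitation is not sound for the local relation $M\vDash_x$. This is exactly why the induction hypothesis is formulated as truth at all points of all models, and why $\vdash$ from hypotheses $\Gamma$ excludes (R4). With that formulation, the induction on $n$ closes, and $\Vdash\Sc$ implies $\vDash\Sc$.
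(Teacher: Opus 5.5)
Your proof is correct and is the same routine verification the paper intends. The paper gives no written proof, only the remark in the semantics section that the axioms hold in every topological model and that the rules are the affine forms of MP and N; your induction on $\Vdash_n$ checks exactly these facts, and you correctly carry validity at all points of every model through the induction so that (R4), the one point the paper leaves implicit, goes through.
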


\begin{definition}
{\em $\Gamma$ \emph{proves} $\Sc$ (denoted by $\Gamma\vdash\Sc$) if $\Gamma\vdash_n\Sc$ for some $n$.\ \
$\Gamma$ is \emph{inconsistent} if $\Gamma\vdash 1\leqslant 0$. Otherwise, it is \emph{consistent}.}
\end{definition}

One verifies (by induction) that \ $\vdash\Sc$ if and only if \ $\Vdash\Sc$. Proof of the following lemma is routine.

\begin{lemma} \label{easy consequence}
For each $\Gamma$, $\phi,\psi$ and $r,s$ the following hold:

{\em(i)}\  $\vdash -\sf{b}_\phi\leqslant\phi\leqslant\sf{b}_\phi$.

{\em(ii)} $r=0\vdash r\phi=0$.

{\em(iii)} $\phi=0\vdash r\phi=0$.

{\em(iv)} $\{r=s,\ \phi=\psi\}\vdash r\phi=s\psi$.

{\em(v)} $\vdash\Box\Box\phi=\Box\phi\leqslant\phi\leqslant\lozenge\phi$.    

{\em(vi)} $\vdash\Box(\Box\phi_1+\cdots+\Box\phi_k)=\Box\phi_1+\cdots+\Box\phi_k$.

{\em(vii)} $\vdash\Box(\phi+r)=\Box\phi+r$. 

{\em(viii)} If $\Gamma\vdash 1\leqslant 0$ then  $\Gamma\vdash\Sc$ for every condition $\Sc$.

\end{lemma}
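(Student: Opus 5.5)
Each item is a short derivation from the axioms (A1)--(A15) and the rules (R1)--(R4). I treat the items separately, using (R1) and (R2) to move terms across inequalities and (R3) to chain inequalities.

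For (i) I induct on the complexity of $\phi$.
\begin{itemize}
\item For $\phi=1$ the claim is (A1).
\item For $\phi=p$ it is (A10).
\item For $\phi+\psi$, add the bounds for $\phi$ and $\psi$ using (R1) twice together with (A2), (A3) and (R3).
\item For $r\phi$ with $r\ge 0$, apply (R2) to $-{\sf b}_\phi\le\phi\le{\sf b}_\phi$ and rewrite $r{\sf b}_\phi$ via (A7). For $r<0$, first obtain $-\phi\le{\sf b}_\phi$ by adding $-\phi-{\sf b}_\phi$ to both sides with (R1) and simplifying with (A2)--(A6) and (A9). Then apply (R2) with $|r|$.
\item For $\Box\phi$, use necessitation (R4) on ${\sf b}_\phi\ge\phi$ and on $-{\sf b}_\phi\le\phi$, then (A15) to replace $\Box({\pm}{\sf b}_\phi)$ by ${\pm}{\sf b}_\phi$.
\end{itemize}
Here I use $\Vdash$ and the stated equivalence $\vdash\Sc\iff\Vdash\Sc$, because (R4) is not available under hypotheses.

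Items (ii)--(iv) are pure linear algebra inside the calculus.
\begin{itemize}
\item For (ii), note that from $r=0$ between constants we may apply (R2) with the nonnegative scalar ${\sf b}_\phi$; combined with (i) this gives $r\phi\le |r|{\sf b}_\phi$. The cleanest route is the following. Write $r\phi=(r-0)\phi$. From $0\le r\le0$, derive $r\phi\le 0$ and $0\le r\phi$ by multiplying $-{\sf b}_\phi\le\phi\le{\sf b}_\phi$ appropriately. Alternatively, when $r$ is a constant term, $r=0$ as a condition on reals is either an axiom instance or inconsistent; in the latter case (viii) applies.
\item For (iii), apply (R2) to $\phi\le0$ and to $0\le\phi$ with $|r|$, using sign flipping as in (i).
\item For (iv), combine (ii) and (iii): $r\phi-s\psi=(r-s)\phi+s(\phi-\psi)$ by (A5)--(A7), and both summands equal $0$.
\end{itemize}

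Item (v) is (A12), (A13), and (A12) applied to $\Box\phi$ (for $\Box\Box\phi\le\Box\phi$). To get $\phi\le\lozenge\phi=-\Box-\phi$, take (A12) for $-\phi$ and move terms with (R1).

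For (vi), the inequality $\le$ is (A12). For $\ge$, iterate the K-form $\Box\alpha+\Box\beta\le\Box(\alpha+\beta)$, which is derived from (A11) by substituting $\phi:=\alpha+\beta$, $\psi:=\beta$. This gives $\Box\Box\phi_1+\dots+\Box\Box\phi_k\le\Box(\Box\phi_1+\dots+\Box\phi_k)$, and then (v) replaces each $\Box\Box\phi_i$ by $\Box\phi_i$.

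For (vii), the K-form with (A15) gives $\Box\phi+r\le\Box(\phi+r)$. Applying the same argument to $\phi+r$ and $-r$ yields the reverse inequality.

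For (viii), from $1\le0$ use (R2) to get $t\le0$ for every $t\ge0$. Then, to prove $\phi\le\psi$, add $\phi$ to $0\le{\sf b}_\phi+{\sf b}_\psi$ reversed: from $({\sf b}_\phi+{\sf b}_\psi)\cdot1\le 0$ and (i), chain $\phi\le{\sf b}_\phi\le -{\sf b}_\psi\le\psi$ via (R1) and (R3).

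The main obstacle I expect is bookkeeping rather than ideas, and it concentrates in two places. The first is the sign-handling for negative scalars, since (R2) only allows $r\ge0$. The second is deriving the ``additive'' form of K from the subtractive (A11) while respecting that $\Gamma\vdash$ forbids necessitation.
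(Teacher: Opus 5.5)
Your proposal is correct. It is the routine axiom-by-axiom verification that the paper omits: the paper gives no proof and only calls the lemma routine. Two steps deserve one more line than you give them. In (ii), take the hypothesis $0\leqslant r$ itself as the first premise of (R2) applied to the bounds from (i), then multiply $r\leqslant 0$ by ${\sf b}_\phi\geqslant 0$ to get $r{\sf b}_\phi\leqslant 0\leqslant -r{\sf b}_\phi$. In (vi)--(vii), turning $\Box((\alpha+\beta)-\beta)$ into $\Box\alpha$ needs (R4), which is legitimate there because no hypotheses are present.
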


\begin{lemma} \label{deduction1}
{\em(i)} If $\Gamma\subseteq\Gamma'$ and $\Gamma\vdash_n\Sc$ then $\Gamma'\vdash_n\Sc$.

{\em(ii)} Let $\bar\Gamma=\{\Sc:\ \Gamma\vdash\Sc\}$. Then, $\bar\Gamma\vdash\Sc$ implies $\Gamma\vdash\Sc$.

{\em(iii)} If $\Gamma\vdash\Sc$, then $\Gamma_0\vdash\Sc$ for some finite $\Gamma_0\subseteq\Gamma$.

\end{lemma}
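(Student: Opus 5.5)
All three parts go by induction on the level $n$ in the definition of $\Gamma\vdash_n\Sc$, using that this definition allows only the non-modal rules (R1)--(R3) on top of $\Gamma$. Necessitation enters only through the theorems $\Vdash\Sc$ that are admitted at level $0$.

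For (i), suppose $\Gamma\subseteq\Gamma'$ and $\Gamma\vdash_n\Sc$, and induct on $n$. If $n=0$, then either $\Vdash\Sc$ or $\Sc\in\Gamma\subseteq\Gamma'$, so $\Gamma'\vdash_0\Sc$. If $n>0$, then either $\Gamma\vdash_k\Sc$ for some $k<n$, or $\Gamma\vdash_k\Sc_1,\Sc_2$ for some $k<n$ with $\frac{\Sc_1,\Sc_2}{\Sc}$ an instance of (R1)--(R3). The induction hypothesis transfers these to $\Gamma'$, and the same clause then gives $\Gamma'\vdash_n\Sc$.

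For (iii), prove by induction on $n$ that $\Gamma\vdash_n\Sc$ implies $\Gamma_0\vdash_n\Sc$ for some finite $\Gamma_0\subseteq\Gamma$. At level $0$, take $\Gamma_0=\emptyset$ if $\Vdash\Sc$, and $\Gamma_0=\{\Sc\}$ if $\Sc\in\Gamma$. In the rule case, $\Sc_1$ and $\Sc_2$ come with finite sets $\Gamma_1$ and $\Gamma_2$ from the induction hypothesis. By (i), $\Gamma_0=\Gamma_1\cup\Gamma_2$ proves both at level $k$, and hence proves $\Sc$ at level $n$.

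For (ii), prove by induction on $n$ that $\bar\Gamma\vdash_n\Sc$ implies $\Gamma\vdash\Sc$.
\begin{itemize}
\item At level $0$: if $\Vdash\Sc$ then $\Gamma\vdash_0\Sc$, and if $\Sc\in\bar\Gamma$ then $\Gamma\vdash\Sc$ by the definition of $\bar\Gamma$.
\item In the rule case: the induction hypothesis gives $\Gamma\vdash_{m_1}\Sc_1$ and $\Gamma\vdash_{m_2}\Sc_2$. Set $m=\max(m_1,m_2)$. Then $\Gamma\vdash_m\Sc_1,\Sc_2$, because each level $\vdash_j$ contains the earlier levels by the first clause of the definition. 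So $\Gamma\vdash_{m+1}\Sc$.
\end{itemize}

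The only delicate point, in (ii) and (iii), is aligning the two premises at a common level $k$ as the definition requires. This is handled by the monotonicity $\vdash_j\subseteq\vdash_{j'}$ for $j\le j'$, which follows directly from the first clause. Everything else is routine bookkeeping.
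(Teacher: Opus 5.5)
Your proposal is correct and matches the paper's proof, which consists of an induction on $n$ for each of the three parts; for (ii) the paper inducts on $\bar\Gamma\vdash_n\Sc\Rightarrow\Gamma\vdash\Sc$ over all $\Gamma,\Sc$, exactly as you do. Your version supplies the details the paper leaves out, namely using (i) to merge the finite sets in (iii) and aligning levels via the first clause in (ii). It only omits the trivial case in (ii) and (iii) where $\Sc$ is already derived at some level $k<n$, which the induction hypothesis handles immediately.
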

\begin{proof}
(i): Proceed by induction on $n$.

(ii): It is proved by induction on $n$ that for every $\Gamma$ and $\Sc$, \ \ $\bar\Gamma\vdash_n\Sc$ implies $\Gamma\vdash\Sc$.

(iii) Proceed by induction on $\vdash_n$.
\end{proof}

\begin{lemma}\label{deduction lemma}
For each $n$, if\ \ $\Gamma, 0\leqslant\theta \vdash_n \phi\leqslant\psi$, then
$\Gamma\vdash r\theta+\phi\leqslant\psi$ for some $r\geqslant0$.
\end{lemma}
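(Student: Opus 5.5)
Induction on $n$. Following Definition \ref{proofdfn}, $\Gamma,0\leqslant\theta\vdash_n\phi\leqslant\psi$ uses only the rules (R1)--(R3), with (R4) allowed only inside $\Vdash$. So it suffices to check that the rules (R1)--(R3) preserve the stated form. I will prove, for every condition $\phi\leqslant\psi$ derived at stage $n$, that there is $r\geqslant0$ with $\Gamma\vdash r\theta+\phi\leqslant\psi$.

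\emph{Base case $n=0$.} There are two possibilities. First, $\Vdash\phi\leqslant\psi$ or $\phi\leqslant\psi\in\Gamma$. Take $r=0$: from $\Gamma\vdash\phi\leqslant\psi$ and the axioms (A9), (A4) (with (A3)) we get $\Gamma\vdash 0\theta+\phi\leqslant\psi$. Here we use that $0\theta=0$ is provable and that equalities may be substituted inside sums via (R1) and (R3). Second, $\phi\leqslant\psi$ is $0\leqslant\theta$ itself. Take $r=1$: we must show $\vdash 1\theta+0\leqslant\theta$, which follows from (A8), (A4) and (A3).

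\emph{Inductive step.} Suppose the condition is obtained at stage $n$ from $\Sc_1,\Sc_2$ derived at an earlier stage.

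For (R1), from $\phi\leqslant\psi$ we pass to $\phi+\eta\leqslant\psi+\eta$. The hypothesis gives $\Gamma\vdash r\theta+\phi\leqslant\psi$. Applying (R1) with $\eta$ gives $r\theta+\phi+\eta\leqslant\psi+\eta$, with associativity handled by (A2) and (R3).

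For (R2), the premises are $0\leqslant s$ and $\phi\leqslant\psi$, and the conclusion is $s\phi\leqslant s\psi$. The condition $0\leqslant s$ involves only reals. I would argue it is then provable outright by (A1) when true. If it is false, the induction hypothesis gives $\Gamma\vdash r\theta+0\leqslant s$ with $s<0$. I will handle this by rescaling: using $r\theta\geqslant -r{\sf b}_\theta$ (Lemma \ref{easy consequence}(i)) does not directly help, so instead note that (R2) itself requires $0\leqslant s$ as a premise. In this degenerate case I would choose to argue from Lemma \ref{easy consequence} that any $\Sc$ follows, but without the inconsistency of $\Gamma$ itself. The cleaner route is to observe that the premise $0\leqslant s$ is read as a side condition with $s\geqslant0$ real, which I would adopt. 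In the main case, the hypothesis gives $r\theta+\phi\leqslant\psi$. Applying (R2) yields $sr\theta+s\phi\leqslant s\psi$ via (A5) and (A7), so we take the new coefficient $sr\geqslant0$.

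For (R3), the premises $\phi\leqslant\psi$ and $\psi\leqslant\chi$ give $r_1\theta+\phi\leqslant\psi$ and $r_2\theta+\psi\leqslant\chi$. Adding $r_2\theta$ to the first via (R1) gives $r_2\theta+r_1\theta+\phi\leqslant r_2\theta+\psi\leqslant\chi$. Combining coefficients with (A6) yields $(r_1+r_2)\theta+\phi\leqslant\chi$.

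The main obstacle is bookkeeping rather than ideas. We need that provable equalities (A2)--(A9) can be substituted freely inside sums, which follows from (R1) and (R3). The other delicate point is the (R2) side condition $0\leqslant r$, which I would treat as a semantic side condition on the real $r$. That is the only place where the non-linear multiplication by the hypothesis could break the form $r\theta+\phi\leqslant\psi$.
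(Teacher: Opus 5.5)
Your base case and your handling of (R1), (R3) and of (R2) with $s\geqslant 0$ match the paper. There is, however, a genuine gap in the (R2) case with $s<0$.

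In the paper's calculus the first premise $0\leqslant s$ of (R2) is not a side condition on the real $s$. It is a condition that must itself be derived, here from $\Gamma,0\leqslant\theta$, like any other premise in the definition of $\Gamma\vdash_n$. When $\Gamma\cup\{0\leqslant\theta\}$ is inconsistent, $0\leqslant s$ is derivable for negative $s$, and a derivation may then use (R2) to reverse an inequality.

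This is not a marginal situation. The lemma is applied in Lemma \ref{consequence2}(i) exactly when $\Gamma,0\leqslant\theta$ is inconsistent while $\Gamma$ need not be. So, as you noticed, you cannot fall back on Lemma \ref{easy consequence}(viii). Declaring $0\leqslant s$ a semantic side condition changes the relation $\vdash_n$, so you prove the lemma for a different, more restrictive calculus. To transfer the result you would have to show that every derivation can be normalized to avoid such instances, and you do not.

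The missing step is short, and it is what the paper does:
\begin{itemize}
\item Apply the induction hypothesis to the premise $0\leqslant s$ to get $r\geqslant 0$ with $\Gamma\vdash r\theta\leqslant s$.
\item Fix a real $N\geqslant \max\{0,{\sf b}_\phi+{\sf b}_\psi\}$. Multiplying by $N$ is a legitimate (R2) step, since $0\leqslant N$ is an instance of (A1), so $\Gamma\vdash Nr\theta\leqslant Ns$.
\item By Lemma \ref{easy consequence}(i), $\vdash\psi-\phi\leqslant N$. Multiplying by $-s\geqslant 0$ and rearranging gives $\vdash Ns+s\phi\leqslant s\psi$.
\item Chaining the two gives $\Gamma\vdash Nr\theta+s\phi\leqslant s\psi$, with coefficient $Nr\geqslant 0$.
\end{itemize}
The key is to use the bounds on $\phi$ and $\psi$, not on $\theta$; that is why your attempt with $-r{\sf b}_\theta$ went nowhere. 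The same computation also covers the subcase $r=0$, where $\Gamma$ itself is inconsistent, without any appeal to (viii).
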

\begin{proof}
We proceed by induction on $n$.
In case $n=0$, set $r=0$ if $\Vdash\phi\leqslant\psi$ or $\Sc\in\Gamma$ and set $r=1$ if the condition
$\phi\leqslant\psi$ coincides with $0\leqslant\theta$.
Suppose the claim is proved for every $k<n$ and consider the case (R2). So, assume for some $k<n$ one has that
$$\Gamma,0\leqslant\theta \ \vdash_k\ 0\leqslant s,\hspace{10mm}
\Gamma,0\leqslant\theta \ \vdash_k\ \phi\leqslant\psi.$$ We have two case:

$\bullet$ $s\geqslant0$: By the induction hypothesis, take $r\geqslant0$ such that
$$\Gamma\vdash\ r\theta+\phi\leqslant\psi$$ and multiply by $s$ to obtain the required result.

$\bullet$ $s<0$: By the induction hypothesis, take $r\geqslant0$ such that $\Gamma\vdash r\theta\leqslant s$.
Then, for sufficiently big $n$ one has that $$\Gamma\vdash nr\theta+s\phi\leqslant s\psi.$$

The cases (R1) and (R3) are similar.
\end{proof}

Repeating the argument, one shows that if
$\Gamma\cup\{0\leqslant\theta_1,\cdots,0\leqslant\theta_k\}\vdash_n\phi\leqslant\psi$,
then there are $r_1,...,r_k\geqslant0$ such that $\Gamma\vdash\ 0\leqslant \sum r_i\theta_i+\phi\leqslant\psi$.

\begin{lemma}\label{consequence2}
(i) If $\Gamma$ is consistent, then either $\Gamma,0\leqslant\phi$ is consistent or
there is $\epsilon>0$ such that $\Gamma\vdash\phi\leqslant-\epsilon$
(in which case $\Gamma,\phi\leqslant-\epsilon$ is consistent).

(ii) If $\Gamma\nvdash\phi\leqslant0$ then $\Gamma, 0\leqslant\phi$ is consistent.

(iii) If $\Gamma,-\epsilon\leqslant\phi$ is consistent for each $\epsilon>0$, then $\Gamma,0\leqslant\phi$ is consistent.
\end{lemma}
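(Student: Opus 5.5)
All three parts will be derived from the deduction lemma (Lemma \ref{deduction lemma}), together with finiteness of proofs and the explosion principle of Lemma \ref{easy consequence}(viii). The key observation is this. Suppose $\Gamma,0\leqslant\phi$ is inconsistent. Then Lemma \ref{deduction lemma}, applied to the conclusion $1\leqslant 0$, yields some $r\geqslant 0$ with $\Gamma\vdash r\phi+1\leqslant 0$.

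For (i), assume $\Gamma$ is consistent and $\Gamma,0\leqslant\phi$ is inconsistent, and take $r$ as above. If $r=0$, then by (A9) and (A4) we get $\Gamma\vdash 1\leqslant 0$, contradicting consistency, so $r>0$. Multiplying by $1/r$ using (R2), and rearranging with (A2)--(A9) and (R1), gives $\Gamma\vdash\phi\leqslant -1/r$. So $\epsilon=1/r$ works.

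For the parenthetical claim that $\Gamma,\phi\leqslant-\epsilon$ is consistent, note that this condition is already provable from $\Gamma$. Hence any proof from $\Gamma,\phi\leqslant-\epsilon$ can be converted into a proof from $\Gamma$ alone, by Lemma \ref{deduction1}(ii) applied to $\bar\Gamma$. Consistency of $\Gamma$ therefore transfers.

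For (ii), argue by contraposition. Suppose $\Gamma,0\leqslant\phi$ is inconsistent. If $\Gamma$ itself is inconsistent, then $\Gamma\vdash\phi\leqslant0$ by Lemma \ref{easy consequence}(viii). Otherwise part (i) gives $\Gamma\vdash\phi\leqslant-\epsilon$. Combining this with $-\epsilon\leqslant 0$ from (A1), via (R1) and (R3), we obtain $\Gamma\vdash\phi\leqslant 0$.

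For (iii), suppose $\Gamma,0\leqslant\phi$ is inconsistent. As above we get $r\geqslant0$ with $\Gamma\vdash r\phi+1\leqslant0$. Now fix $\epsilon>0$ with $r\epsilon<1$, say $\epsilon=1/(2r+2)$. The condition $-\epsilon\leqslant\phi$ rearranges (by (R1) and the axioms) to $0\leqslant\phi+\epsilon$. From this, (R2) gives $0\leqslant r\phi+r\epsilon$. Adding this to $r\phi+1\leqslant 0$ and simplifying, we derive $\Gamma,-\epsilon\leqslant\phi\vdash 1-r\epsilon\leqslant 0$. Multiplying by the positive number $1/(1-r\epsilon)$ via (R2) yields $1\leqslant0$. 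Thus $\Gamma,-\epsilon\leqslant\phi$ is inconsistent, contradicting the hypothesis.

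The only delicate point is the bookkeeping of affine rearrangements: moving terms across $\leqslant$, cancelling $r\phi-r\phi$, and combining two derived inequalities into their sum. These are routine uses of (R1), (R3) and (A2)--(A9), in the same spirit as Lemma \ref{easy consequence}. I would state them once as a small derived rule, namely that from $\alpha\leqslant\beta$ and $\gamma\leqslant\delta$ one derives $\alpha+\gamma\leqslant\beta+\delta$. This follows from two applications of (R1) followed by (R3).
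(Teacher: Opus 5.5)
Your proof is correct and follows the same route as the paper. Everything rests on extracting $r\geqslant0$ with $\Gamma\vdash r\phi+1\leqslant0$ from the deduction lemma, ruling out $r=0$ by consistency, and dividing by $r$; your (iii) inlines that argument (choosing $\epsilon<1/r$) rather than citing (i), and you also make explicit two points the paper leaves tacit, namely the inconsistent-$\Gamma$ case in (ii) and the justification of the parenthetical consistency claim in (i).
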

\begin{proof}
(i) Assume $\Gamma,0\leqslant\phi\vdash1\leqslant0$. Then, there is $r\geqslant0$ such that
$\Gamma\vdash r\phi+1\leqslant0$. Clearly, then $r\neq0$ and hence $\Gamma\vdash\phi\leqslant\frac{-1}{r}$.

(ii) is a consequences of (i).

(iii): Otherwise, by (i), $\Gamma\vdash\phi\leqslant-\epsilon$ for some $\epsilon>0$. This is a contradiction.
\end{proof}

\begin{lemma}
Assume $\Gamma,0\leqslant\theta\ \vdash\phi\leqslant\psi$ and $\Gamma,\theta\leqslant0\ \vdash\phi\leqslant\psi$.
Then, $\Gamma\vdash\phi\leqslant\psi$.
\end{lemma}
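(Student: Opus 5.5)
Apply the deduction lemma (Lemma \ref{deduction lemma}) to each hypothesis, then eliminate $\theta$ by a positive linear combination. The second hypothesis is handled by writing $\theta\leqslant0$ as $0\leqslant-\theta$, i.e.\ as $0\leqslant -\theta$ up to the provable equivalences given by (A1)--(A9) and (R1). This yields $r,s\geqslant0$ with
$$\Gamma\vdash r\theta+\phi\leqslant\psi,\qquad \Gamma\vdash -s\theta+\phi\leqslant\psi.$$

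The plan is to cancel $\theta$, and we split into cases according to $r$ and $s$.

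If $r=0$, then by Lemma \ref{easy consequence}(ii) $\vdash r\theta=0$, so the first condition reduces (via (A4), (R3)) to $\Gamma\vdash\phi\leqslant\psi$. The case $s=0$ is symmetric.

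If $r,s>0$, multiply the first condition by $s$ and the second by $r$ using (R2), which is legitimate since $0\leqslant s$ and $0\leqslant r$ are instances of (A1). This gives
$$\Gamma\vdash rs\theta+s\phi\leqslant s\psi,\qquad \Gamma\vdash -rs\theta+r\phi\leqslant r\psi.$$
Adding the two, by (R1) twice and (R3), we get
$$rs\theta-rs\theta+(s+r)\phi\leqslant (s+r)\psi.$$
The axioms (A2)--(A9) turn this into $(r+s)\phi\leqslant(r+s)\psi$. Finally multiply by $\frac{1}{r+s}>0$ via (R2) and simplify with (A7), (A8) to obtain $\Gamma\vdash\phi\leqslant\psi$.

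The main obstacle is bookkeeping rather than ideas, and there are two points of care. First, adding two derived inequalities $\alpha\leqslant\beta$ and $\gamma\leqslant\delta$ must be derived explicitly: apply (R1) with $\gamma$ to get $\alpha+\gamma\leqslant\beta+\gamma$, apply (R1) with $\beta$ to get $\gamma+\beta\leqslant\delta+\beta$, commute using (A3), and chain with (R3). Second, in the deduction lemma the extra hypothesis must literally have the form $0\leqslant\theta'$. For the second hypothesis this requires first showing that $\Gamma,\theta\leqslant0$ and $\Gamma,0\leqslant-\theta$ prove the same conditions, which follows from (R1) adding $-\theta$ together with the affine axioms. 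I would verify that routine equivalence first, then run the case analysis above.
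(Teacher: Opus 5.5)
Your proposal is correct and follows the paper's proof. You apply the deduction lemma to obtain $\Gamma\vdash r\theta+\phi\leqslant\psi$ and $\Gamma\vdash -s\theta+\phi\leqslant\psi$, settle the cases $r=0$ or $s=0$ directly, and otherwise combine the two conditions with weights $s,r$ to get $(r+s)\phi\leqslant(r+s)\psi$. Your extra bookkeeping (rewriting $\theta\leqslant0$ as $0\leqslant-\theta$ before invoking the deduction lemma, and deriving the sum of two inequalities from (R1), (A3) and (R3)) fills in details the paper leaves implicit.
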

\begin{proof}
There are $r,s\geqslant0$ such that
$$\Gamma\ \vdash\ r\theta+\phi\leqslant\psi,\ \ \ \ \ \ \ \ \ \Gamma \vdash\ -s\theta+\phi\leqslant\psi.$$
If $r=0$ or $s=0$, we are done. Otherwise, we have that
$$\Gamma\ \vdash\ (r+s)\phi\leqslant(r+s)\psi$$ which implies the intended claim.
\end{proof}

\begin{lemma}\label{mean-proof0}
Assume both $\Gamma,0\leqslant\phi$ and $\Gamma,\phi\leqslant0$ are consistent. Then, $\Gamma,\phi=0$ is consistent.
\end{lemma}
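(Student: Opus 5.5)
We argue by contradiction, using the deduction lemma (Lemma \ref{deduction lemma}) in its multi-hypothesis form. Note that $\Gamma,\phi=0$ is the set $\Gamma\cup\{0\leqslant\phi,\ \phi\leqslant0\}$, and the second condition can be written as $0\leqslant-\phi$. So suppose $\Gamma,\phi=0$ is inconsistent, i.e.\ $\Gamma\cup\{0\leqslant\phi,\ 0\leqslant-\phi\}\vdash 1\leqslant0$. The remark after Lemma \ref{deduction lemma} then gives $r,s\geqslant0$ with
$$\Gamma\vdash r\phi-s\phi+1\leqslant0,$$
that is, $\Gamma\vdash(r-s)\phi+1\leqslant0$ after rearranging with axioms (A2)--(A9).

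We then split into cases according to the sign of $r-s$.

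If $r=s$, then $(r-s)\phi=0\phi=0$ by (A9), and hence $\Gamma\vdash1\leqslant0$. This contradicts the consistency of $\Gamma$, which follows from the consistency of $\Gamma,0\leqslant\phi$ by Lemma \ref{deduction1}(i).

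If $r>s$, multiply by $\frac1{r-s}$ using (R2) and (A7). This gives $\Gamma\vdash\phi\leqslant-\frac{1}{r-s}$. Adding the hypothesis $0\leqslant\phi$ and chaining with (R3), we get $\Gamma,0\leqslant\phi\vdash0\leqslant-\frac1{r-s}$. From this, (R2) and (A1) yield $1\leqslant0$, contradicting the consistency of $\Gamma,0\leqslant\phi$.

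If $r<s$, symmetrically we obtain $\Gamma\vdash\frac{1}{s-r}\leqslant\phi$. Combined with $\phi\leqslant0$ this gives $\Gamma,\phi\leqslant0\vdash\frac1{s-r}\leqslant0$, hence $1\leqslant0$, contradicting the consistency of $\Gamma,\phi\leqslant0$.

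The only delicate step is justifying the two-hypothesis deduction form, which the paper asserts after Lemma \ref{deduction lemma} ("repeating the argument"). The rest is routine algebra inside the proof system: rearranging $r\phi+(-s)\phi$ into $(r-s)\phi$, and passing from $0\leqslant-c$ with $c>0$ to $1\leqslant0$. Both follow from (A1)--(A9), (R1)--(R3) and Lemma \ref{easy consequence}.

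Alternatively, one could avoid the two-hypothesis form. Apply Lemma \ref{deduction lemma} once to $\Gamma,0\leqslant\phi$ with the added condition $\phi\leqslant0$, and then use Lemma \ref{consequence2}(i) with $\Gamma':=\Gamma\cup\{0\leqslant\phi\}$. This gives $\Gamma'\vdash-\phi\leqslant-\epsilon$, i.e.\ $\Gamma'\vdash\epsilon\leqslant\phi$. A second application of the deduction lemma then leads to the same sign analysis.
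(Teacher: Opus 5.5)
Your proof is correct and follows the paper's own argument. The paper also applies the multi-hypothesis form of the deduction lemma to get $r,s\geqslant0$ with $\Gamma\vdash r\phi-s\phi+1\leqslant0$, and then says only that the three cases $r<s$, $r=s$, $r>s$ ``clearly'' lead to contradiction; you have spelled those cases out, with $r=s$, $r>s$, $r<s$ contradicting the consistency of $\Gamma$, of $\Gamma,0\leqslant\phi$, and of $\Gamma,\phi\leqslant0$ respectively.
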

\begin{proof}
Suppose that $$\Gamma,\phi\leqslant0,\ 0\leqslant\phi\ \ \vdash\ 1\leqslant0.$$
Then, there are $r,s\geqslant0$ such that $$\Gamma \ \vdash \ r\phi-s\phi+1\leqslant0.$$
Clearly, all cases $r<s$, $r=s$ and $r>s$ lead to contradiction.
\end{proof}

\section{Canonical model and completeness}\label{complete}

Let $\Gamma$ be a consistent theory.
By the axiom of choice and Lemma \ref{deduction1}, $\Gamma$ is contained in a maximal consistent theory.
Every maximal consistent theory $\Gamma$ is closed, i.e. $\Gamma\vdash\Sc$ implies that $\Sc\in\Gamma$.
Moreover, for every sentences $\phi,\psi$ either $\phi\leqslant\psi\in\Gamma$ or $\psi\leqslant\phi\in\Gamma$.

\begin{lemma}\label{equality} Let $\Gamma$ be a maximal consistent theory and $\phi$ be a proposition.
Then there is a unique $r$ such that $\phi=r\in \Gamma$.
\end{lemma}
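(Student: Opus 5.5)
The plan is to locate $r$ as the supremum of the lower bounds for $\phi$ in $\Gamma$. Put
$$A=\{s\in\Rn:\ s\leqslant\phi\in\Gamma\},\qquad B=\{s\in\Rn:\ \phi\leqslant s\in\Gamma\}.$$
By Lemma \ref{easy consequence}(i) and closedness of $\Gamma$, $-{\sf b}_\phi\in A$ and ${\sf b}_\phi\in B$, so both sets are nonempty. Since $\Gamma$ is maximal, for every real $s$ either $s\leqslant\phi\in\Gamma$ or $\phi\leqslant s\in\Gamma$, so $A\cup B=\Rn$.

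Next I show that $A$ lies below $B$. Suppose $s\in A$, $t\in B$ and $t<s$. Then (R3) gives $\Gamma\vdash s\leqslant t$. Subtracting $s$ with (R1) and using (A1)–(A9) for the arithmetic yields $\Gamma\vdash 0\leqslant t-s$. Multiplying by $1/(s-t)>0$ via (R2) then gives $\Gamma\vdash 0\leqslant -1$, hence $\Gamma\vdash 1\leqslant 0$, contradicting consistency. Therefore $\sup A\leqslant\inf B$, and because $A\cup B=\Rn$ both equal a common real number $r$. In addition, $A$ is downward closed and $B$ is upward closed, by (A1) and (R3).

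The main obstacle is showing that $r$ itself belongs to both $A$ and $B$, i.e. that the bounds are attained. This is where the Archimedean-type Lemma \ref{consequence2} enters. Suppose $\phi\leqslant r\notin\Gamma$. By maximality, $\Gamma\nvdash\phi-r\leqslant 0$, so by Lemma \ref{consequence2}(ii) the theory $\Gamma, 0\leqslant\phi-r$ is consistent. Maximality then puts $0\leqslant\phi-r$, hence $r\leqslant\phi$, into $\Gamma$.

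Symmetrically, if $r\leqslant\phi\notin\Gamma$, apply the same argument to $r-\phi$ to get $\phi\leqslant r\in\Gamma$. So at least one of the two conditions lies in $\Gamma$, and by maximality one of them always does. The real task is to get both.

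Suppose $r\leqslant\phi\in\Gamma$. For every $\epsilon>0$ we have $r+\epsilon\in B$, since otherwise $r+\epsilon\in A$, contradicting $\sup A=r$. So $\Gamma\vdash\phi-r\leqslant\epsilon$ for every $\epsilon>0$. Suppose, for contradiction, that $\Gamma\nvdash\phi-r\leqslant0$. Then Lemma \ref{consequence2}(i) applied to $r-\phi$ shows that either $\Gamma,0\leqslant r-\phi$ is consistent, which by maximality puts $\phi\leqslant r$ in $\Gamma$ and finishes this case, or $\Gamma\vdash r-\phi\leqslant-\epsilon$ for some $\epsilon>0$. The second alternative gives $\Gamma\vdash\phi-r\geqslant\epsilon$. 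Combined with $\Gamma\vdash\phi-r\leqslant\epsilon/2$, this contradicts consistency by the same scaling argument as above.

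The case $\phi\leqslant r\in\Gamma$ is symmetric, with $A$ and $B$ interchanged. In both cases $\phi=r\in\Gamma$.

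For uniqueness, if $\phi=r$ and $\phi=r'$ are both in $\Gamma$, then (R3) gives $\Gamma\vdash r\leqslant r'$ and $\Gamma\vdash r'\leqslant r$. If $r\neq r'$, the scaling argument from the second paragraph again yields $\Gamma\vdash1\leqslant0$, contradicting consistency. Hence $r=r'$.
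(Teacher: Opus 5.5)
Your proof is correct and follows essentially the paper's route: $r$ is the supremum of the lower bounds, which equals the infimum of the upper bounds because $\Gamma$ always contains $\phi\leqslant s$ or $s\leqslant\phi$, and then the Archimedean-type Lemma \ref{consequence2} shows that $r$ is attained on both sides. The case split on which of $r\leqslant\phi$, $\phi\leqslant r$ lies in $\Gamma$, and the paragraph re-deriving that dichotomy, are unnecessary, since $r+\epsilon\in B$ and $r-\epsilon\in A$ hold for every $\epsilon>0$ in any case; they do no harm, however.
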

\begin{proof} Let
$$r=\sup{\{u:\ u\leqslant\phi\in \Gamma}\},\hspace{15mm} s=\inf{\{u:\ \phi\leqslant u\in \Gamma}\}.$$
For every $\epsilon>0$, both $r-\epsilon\leqslant\phi$ and $\phi\leqslant s+\epsilon$
belong to $\Gamma$. Therefore, $r\leqslant s$.
Also, for each $\epsilon>0$ we have that $(r+\epsilon)\leqslant\phi\notin\Gamma$.
So, $\phi\leqslant r+\epsilon\in\Gamma$ and hence $s\leqslant r+\epsilon$. We conclude that $r=s$.
Note that $r\leqslant\phi\in\Gamma$ since $\Gamma,r\leqslant\phi$ is consistent.
Similarly, $\phi\leqslant r\in \Gamma$.
\end{proof}

Let $X$ be the set of all maximal consistent theories. For each $x\in X$ and proposition
$\phi$, the unique $r$ such that $\phi=r$ belongs to $x$ is denoted by $\phi^x$.
Let $\tau$ be the coarsest topology on $X$ for which every map $x\mapsto(\Box\phi)^x$ is lower semi-continuous.
In other word, $\tau$ is the smallest topology for which every set of the form
$$B(\phi,r)=\{x\in X:\ r<(\Box\phi)^x\}$$ is open.
So, finite intersections of such sets form a basis for $\tau$.
Define a valuation function by $\v_p(x)=p^x$. Then, $M=(X,\v)$ is called the \emph{canonical model} of AML.

\begin{proposition} (Truth lemma)
For each $\phi$, \ $\phi^M(x)=\phi^x$.
\end{proposition}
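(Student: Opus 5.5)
The proof is by induction on the complexity of $\phi$, proving the claim simultaneously for all $x\in X$.

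\emph{Non-modal cases.} For $1$ and atomic $p$ the claim holds by the definition of $\v$ and axiom (A1). For $\phi+\psi$ and $r\phi$, suppose $\phi=a$ and $\psi=b$ belong to $x$. Rules (R1), (R3) and Lemma \ref{easy consequence}(iv) give $x\vdash\phi+\psi=a+b$ and $x\vdash r\phi=ra$. Since $x$ is closed, these conditions belong to $x$. So $(\phi+\psi)^x=\phi^x+\psi^x$ and $(r\phi)^x=r\phi^x$, and the induction hypothesis finishes these cases. More generally, $x\mapsto\theta^x$ is linear in $\theta$, and $\vdash\theta\leqslant\theta'$ implies $\theta^x\leqslant\theta'^x$. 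I will use both facts freely.

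\emph{The case $\Box\phi$.} By the induction hypothesis, $(\Box\phi)^M(x)$ is the lower semi-continuous envelope of $f(t)=\phi^t$ at $x$. So I must show that this envelope equals $s:=(\Box\phi)^x$.

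\emph{Envelope $\geqslant s$.} The map $t\mapsto(\Box\phi)^t$ is lower semi-continuous by the choice of $\tau$. By (A12) it lies below $f$. Hence it lies below $\bunderline{f}$, which gives $s\leqslant\bunderline f(x)$.

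\emph{Envelope $\leqslant s$.} This is the main step. Use the form $\bunderline f(x)=\sup_U\inf_{t\in U}f(t)$. It suffices to show: for every basic neighbourhood $U=\bigcap_{i\leqslant k}B(\psi_i,r_i)$ of $x$ and every $\epsilon>0$, there is $t\in U$ with $\phi^t\leqslant s+\epsilon$.

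Choose $\delta>0$ with $r_i+\delta\leqslant(\Box\psi_i)^x$ for all $i$. Put $\theta_i=\Box\psi_i-r_i-\delta$ and $\Gamma=\{0\leqslant\theta_1,\dots,0\leqslant\theta_k\}$. Then $\Gamma\subseteq x$, so $\Gamma$ is consistent. I claim $\Gamma,\,0\leqslant s+\epsilon-\phi$ is consistent.

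Suppose not. By Lemma \ref{consequence2}(i) there is $\eta>0$ with $\Gamma\vdash s+\epsilon+\eta\leqslant\phi$. By the multi-hypothesis form of Lemma \ref{deduction lemma}, there are $a_i\geqslant0$ with
$$\vdash\ \textstyle\sum_i a_i\theta_i+s+\epsilon+\eta\leqslant\phi .$$
Apply (R4) to get $\Box(\sum_i a_i\theta_i+s+\epsilon+\eta)\leqslant\Box\phi$. Now simplify the left side:
\begin{itemize}
\item by Lemma \ref{easy consequence}(vii), the constant $s+\epsilon+\eta-\sum_i a_i(r_i+\delta)$ comes out of the box;
\item by (A14), each nonnegative coefficient $a_i$ comes out;
\item by Lemma \ref{easy consequence}(vi), $\Box\big(\sum_i a_i\Box\psi_i\big)=\sum_i a_i\Box\psi_i$.
\end{itemize}
The result is $\vdash\sum_i a_i\theta_i+s+\epsilon+\eta\leqslant\Box\phi$. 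Evaluate at $x$: each $\theta_i^x\geqslant0$, so $s+\epsilon+\eta\leqslant(\Box\phi)^x=s$, a contradiction.

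So $\Gamma\cup\{\phi\leqslant s+\epsilon\}$ is consistent. Extend it to a maximal consistent theory $t$. Then $(\Box\psi_i)^t\geqslant r_i+\delta>r_i$ for each $i$, so $t\in U$, and $\phi^t\leqslant s+\epsilon$, as required.

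The expected obstacle is this last step: turning inconsistency into a \emph{provable} inequality whose left side survives boxing. This is exactly why the basic open sets are defined by values of $\Box\psi$ rather than $\psi$. It lets axiom 4, through Lemma \ref{easy consequence}(vi), keep $\Box\big(\sum_i a_i\theta_i\big)=\sum_i a_i\theta_i$, while K and necessitation transfer the bound to $\Box\phi$.
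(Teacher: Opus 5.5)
Your proof is correct and follows essentially the paper's argument: the $\geqslant$ direction is the same use of (A12) and the definition of $\tau$, and the $\leqslant$ direction uses the same chain (multi-hypothesis deduction lemma with nonnegative weights, then necessitation, then Lemma 3.5(vi)/(vii) to strip the box), just run contrapositively. Your version is also somewhat more careful than the paper's. It makes explicit, via Lemma \ref{consequence2}(i) and a Lindenbaum extension landing in $U$, the step the paper compresses into ``this implies $\{\delta\leqslant\Box\psi_i\}\vdash r-\epsilon\leqslant\phi$'', and it fixes $\delta$ once from the values at $x$.
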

\begin{proof}
We use induction. The claim trivially holds for atomic $p$. The connective cases $r\phi$ and $\phi+\psi$ are obvious.
Assume the claim is proved for $\phi$. We prove it for $\Box\phi$. So, we must show that
$(\Box\phi)^M(x)=(\Box\phi)^x$ or that $\bunderline{\phi^M}(x)=(\Box\phi)^x$.
Let $(\Box\phi)^x=r$ and $\epsilon>0$ be given. Then, $x\vdash\Box\phi=r$ and hence
$$x\in\{y:\ r-\epsilon<(\Box\phi)^y\}=B(\phi,r-\epsilon).$$
So, as $\phi^y=\phi^M(y)$ by the induction hypothesis and $\vdash\Box\phi\leqslant\phi$, we have that
$$r-\epsilon\leqslant\inf\{\phi^M(y):\ y\in B(\phi,r-\epsilon)\}.$$
This implies that $r-\epsilon\leqslant\bunderline{\phi^M}(x)$. Since $\epsilon>0$ is arbitrary, we conclude that
$$r=(\Box\phi)^x\leqslant\bunderline{\phi^M}(x).$$

Now assume $\bunderline{\phi^M}(x)=r$ and $\epsilon>0$ is given.
Then, there is a basic open set $U\ni x$ such that (by induction hypothesis)
$$y\in U\ \ \Longrightarrow\ \ r-\epsilon\leqslant\phi^M(y)=\phi^y.$$
There are $\psi_1,...,\psi_k$ such that $$U=B(\Box\psi_1,0)\cap\cdots\cap B(\Box\psi_k,0).$$
This implies that for every $\delta>0$ $$\{\delta\leqslant\Box\psi_1,...,\delta\leqslant\Box\psi_k\}\vdash r-\epsilon\leqslant\phi.$$
So, by Lemma \ref{deduction lemma}, there are $r_1,...,r_k\geqslant0$ such that
$$\vdash\ \sum r_i(\Box\psi_i-\delta)+r-\epsilon\leqslant\phi.$$
Hence, by Lemma \ref{easy consequence} (vi), $$\vdash\ \sum r_i(\Box\psi_i-\delta)+r-\epsilon\leqslant\Box\phi.$$
We conclude that, as $\delta>0$ is arbitrary, for every $y\in U$ one has that $r-\epsilon\leqslant(\Box\phi)^y$.
In particular, $r-\epsilon\leqslant(\Box\phi)^x$ and hence $\bunderline{\phi^M}(x)\leqslant(\Box\phi)^x$ as $\epsilon>0$ is arbitrary.
\end{proof}

\begin{theorem}
{\em(Completeness)} Every consistent theory $\Gamma$ is satisfiable.
\end{theorem}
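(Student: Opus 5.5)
The completeness theorem follows almost immediately from the canonical model construction and the Truth lemma. Let $\Gamma$ be a consistent theory. As noted at the start of Section \ref{complete}, the axiom of choice (Zorn's lemma) together with Lemma \ref{deduction1}(iii) extends $\Gamma$ to a maximal consistent theory $x\supseteq\Gamma$. The finiteness of proofs guarantees that the union of a chain of consistent theories is consistent. This $x$ is a point of the canonical model $M=(X,\v)$.

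Next I check that $M\vDash_x\Gamma$. Take any condition $\phi\leqslant\psi$ in $\Gamma$, so $\phi\leqslant\psi\in x$. By Lemma \ref{equality} there are unique reals with $\phi=\phi^x$ and $\psi=\psi^x$ in $x$. Since $x$ is closed under $\vdash$, MP (R3) applied to $\phi^x\leqslant\phi\leqslant\psi\leqslant\psi^x$ gives $\phi^x\leqslant\psi^x\in x$.

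This condition holds in $\mathbb R$. If instead $\phi^x>\psi^x$ in $\mathbb R$, then by (A1) and the rules (R1), (R2) one derives $1\leqslant 0$ from $\phi^x\leqslant\psi^x$ (scale the positive gap to $1$), contradicting the consistency of $x$. Hence $\phi^x\leqslant\psi^x$ in $\mathbb R$. By the Truth lemma, $\phi^M(x)=\phi^x\leqslant\psi^x=\psi^M(x)$, so $M\vDash_x\phi\leqslant\psi$. Since this holds for every condition of $\Gamma$, $\Gamma$ is satisfiable.

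The only point needing care is this last one: a provable inequality between real constants in a consistent theory must be true in $\mathbb R$. If $r>s$ and $r\leqslant s\in x$, subtract $s$ using (R1) and axioms (A2)--(A9) to get $r-s\leqslant0$. Multiply by $1/(r-s)>0$ with (R2) to get $1\leqslant0$. Everything else, namely the existence of $x$ and the agreement of semantic and syntactic values, is supplied by the earlier lemmas and the Truth lemma. I also note that the values $p^x$ lie in $[0,1]$ by (A10), so $\v$ is a legitimate valuation and $M$ is indeed a model.
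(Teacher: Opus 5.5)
Your proof is correct and follows the paper's own argument: extend $\Gamma$ to a maximal consistent $x$, apply the Truth lemma in the canonical model, and conclude $M\vDash_x\Gamma$. Your additional checks fill in details the paper leaves implicit, namely that $\phi^x\leqslant\psi^x$ holds in $\mathbb R$ (via (A1), (R1), (R2) and the consistency of $x$) and that $p^x\in[0,1]$ by (A10).
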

\begin{proof}
Let $M=(X,\v)$ be the canonical model and $x\in X$ be a maximal consistent theory containing $\Gamma$.
Then, for each $\phi$,\ \ $\phi^M=\phi^x$. In particular, if $\phi\leqslant\psi\in\Gamma\subseteq x$,
then $\phi^M=\phi^x\leqslant\psi^x=\psi^M$.
We conclude that $M\vDash_x\Gamma$.
\end{proof}

\begin{theorem} {\em (Approximate completeness)}
If $\Gamma\vDash0\leqslant\phi$, then $\Gamma\vdash-\frac{1}{n}\leqslant\phi$ for all $n$.
\end{theorem}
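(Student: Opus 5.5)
We argue by contraposition, using the Completeness theorem together with Lemma \ref{consequence2}. Suppose that for some $n$ we have $\Gamma\nvdash-\frac{1}{n}\leqslant\phi$. We will build a model of $\Gamma$ in which $\phi$ takes a negative value at some point, which contradicts $\Gamma\vDash0\leqslant\phi$.

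\emph{Step 1: a consistent extension.} Rewrite the condition $-\frac1n\leqslant\phi$ in the form $\theta\leqslant0$ with $\theta=-\phi-\frac1n$. Using (A1)--(A9) and rules (R1), (R3), the conditions $-\frac1n\leqslant\phi$ and $-\phi-\frac1n\leqslant0$ are provably interchangeable. Hence $\Gamma\nvdash\theta\leqslant0$. Lemma \ref{consequence2}(ii) then shows that $\Gamma,0\leqslant\theta$ is consistent. Rearranging once more, the theory
$$\Gamma':=\Gamma\cup\{\phi\leqslant-\tfrac1n\}$$
is consistent; note that $\Gamma'$ is $\Gamma,0\leqslant\theta$ up to provable equivalence of conditions. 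If $\Gamma$ itself is inconsistent, then $\Gamma$ proves everything by Lemma \ref{easy consequence}(viii), so the statement holds trivially. We may therefore assume $\Gamma$ is consistent, although this case distinction is not really needed, since the argument above never uses consistency of $\Gamma$.

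\emph{Step 2: a countermodel.} Apply the Completeness theorem to $\Gamma'$. This gives a model $M=(X,\v)$ (the canonical model) and a point $x\in X$ with $M\vDash_x\Gamma'$. In particular $M\vDash_x\Gamma$ and $\phi^M(x)\leqslant-\frac1n<0$. Therefore $\Gamma\nvDash0\leqslant\phi$, which is the required contradiction.

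\emph{Expected difficulty.} The only delicate point is the bookkeeping in Step 1: we must check that moving terms across the inequality is legitimate in this proof system, i.e.\ that $\vdash$ treats $\phi\leqslant\psi$ and $0\leqslant\psi-\phi$ interchangeably. This follows from (R1) together with the axioms (A2)--(A9). Everything else is a direct application of the results already established.

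It is also worth explaining why the conclusion is only approximate, with $-\frac1n$ rather than $0$. Completeness does not allow us to derive $0\leqslant\phi$ itself. Failure of $\Gamma\vdash0\leqslant\phi$ yields consistency of $\Gamma,\phi\leqslant0$ at best, or more precisely only of $\Gamma,\phi\leqslant-\epsilon$ via Lemma \ref{consequence2}(i) when $\Gamma,0\leqslant-\phi$ is inconsistent. Neither of these is a strict negative value, so the strict margin $\frac1n$ is exactly what the argument needs in order to produce $\phi^M(x)<0$.
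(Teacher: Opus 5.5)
Your proof is correct and follows the paper's argument: the paper likewise puts $\Gamma,\phi\leqslant-\frac{1}{n}$ inside a maximal consistent $x$ and derives the contradiction in the canonical model, and your Step~1 via Lemma~\ref{consequence2}(ii) just makes explicit the consistency step the paper leaves implicit. The one flaw is in your closing aside, not the proof: if $\Gamma,0\leqslant-\phi$ is inconsistent, Lemma~\ref{consequence2}(i) applied to $-\phi$ gives $\Gamma\vdash\epsilon\leqslant\phi$ (hence $\Gamma\vdash0\leqslant\phi$), not consistency of $\Gamma,\phi\leqslant-\epsilon$.
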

\begin{proof}
Otherwise, there is a maximal consistent $x\in X$ containing $\Gamma,\phi\leqslant-\frac{1}{n}$ for some $n$.
Then, $M\vDash_x\Gamma$ and $-\frac{1}{n}\leqslant\phi^M(x)$.
On the other hand, by the assumption, $0\leqslant\phi^M(x)$. This is a contradiction.
\end{proof}

\begin{theorem}\label{Weak completeness} {\em (Weak completeness)}
$\vDash0\leqslant\phi$ if and only if $\Vdash-\frac{1}{n}\leqslant\phi$ for all $n\geqslant1$.
\end{theorem}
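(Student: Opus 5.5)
The equivalence splits into two directions, and we handle them separately: the forward direction via Approximate completeness with $\Gamma=\emptyset$, the converse via soundness plus a limiting argument.

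\emph{Forward direction.} Assume $\vDash 0\leqslant\phi$. Then $\emptyset\vDash 0\leqslant\phi$, since every model and point trivially satisfies the empty theory. Approximate completeness with $\Gamma=\emptyset$ gives $\emptyset\vdash -\frac1n\leqslant\phi$ for all $n$. It remains to convert $\emptyset\vdash$ into $\Vdash$, using the remark after the definition of $\vdash$ that $\vdash\Sc$ iff $\Vdash\Sc$. If one wants to check that remark, argue by induction on $n$ that $\emptyset\vdash_n\Sc$ implies $\Vdash\Sc$. At stage $0$ the condition is either an axiom-derivable $\Sc$ or a member of $\emptyset$, so only the first case occurs. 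At the inductive step the rules used are (R1)--(R3), which are also available for $\Vdash$. Premises derived at possibly different levels $k_1,k_2$ are lifted to $\max(k_1,k_2)$ by the first clause of the definition of $\Vdash_n$.

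\emph{Converse.} Assume $\Vdash -\frac1n\leqslant\phi$ for every $n\geqslant1$. Soundness (Proposition~\ref{soundness}) gives $\vDash -\frac1n\leqslant\phi$ for each $n$. So for every model $M=(X,\v)$ and every $x\in X$ we have $-\frac1n\leqslant\phi^M(x)$ for all $n$. Letting $n\to\infty$ yields $0\leqslant\phi^M(x)$, which is exactly $\vDash 0\leqslant\phi$.

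The only step with real content is soundness, which is assumed. If it needs checking, one verifies by induction on derivations that each axiom holds in every model and that each rule preserves global validity. For (A11)--(A15) this uses the envelope facts from Section~\ref{intro}: superadditivity $\bunderline{f}+\bunderline{g}\leqslant\bunderline{f+g}$ for K, $\bunderline f\leqslant f$ for T, idempotence of $\bunderline{\cdot}$ for 4, positive homogeneity for (A14), and envelopes of constants for (A15). Rule (R4) is sound because $f\leqslant g$ implies $\bunderline f\leqslant\bunderline g$. Rules (R1)--(R3) are pointwise. None of this is a genuine obstacle, so the argument is essentially a short combination of the two earlier results.
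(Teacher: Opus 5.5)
Your proof is correct and essentially the paper's own. For the forward direction the paper argues by contraposition: if $\nVdash-\frac{1}{n}\leqslant\phi$, it extends $\phi\leqslant-\frac{1}{n}$ to a maximal consistent $x$ and evaluates in the canonical model, which is exactly the proof of Approximate completeness specialized to $\Gamma=\emptyset$; the paper then gets the converse from soundness, just as you do.
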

\begin{proof}
Suppose $\nVdash-\frac{1}{n}\leqslant\phi$ for some $n$. Then, $\phi\leqslant-\frac{1}{n}$
is contained in a maximal consistent theory $x$. So, $M\nvDash_x0\leqslant\phi$ and hence $\nvDash0\leqslant\phi$.
The inverse direction is a consequence of Theorem \ref{soundness}.
\end{proof}

\section{Affine compactness}\label{compact}

In this section we prove affine compactness theorem by the ultramean construction.
An \emph{ultracharge} on an index set $\Omega$ is a finitely additive probability measure $\mu$ defined on the power set of $\Omega$.
Let $(\Omega,\mu)$ be an ultracharge space and for each $i\in\Omega$, $M_i=(X_i,\v_i)$ be a model.
For $(a_i),(b_i)\in\prod_iX_i$ set $(a_i)\sim(b_i)$ if $\mu\{i: a_i=b_i\}=1$. The equivalence class of $(a_i)$ is denoted by $[a_i]$.
The set of all such classes is denoted by $X$.
Let $U_i\subseteq X_i$ be open and set $$[U_i]=\{[a_i]:\ \mu\{i:a_i\in U_i\}=1\}.$$
Then, $[U_i]\cap[V_i]=[U_i\cap V_i]$ and these sets form a basis of a topology $\tau$ on $X$.
Finally, for each atomic $p$ let (see \cite{Rao} for finitely additive integration)
$$\v_p([a_i])=\int\v_{ip}(a_i)d\mu.$$ Then, $M=(X,\v)$ is a topological model which we denote by $\prod_\mu M_i$.

\begin{lemma}
For every proposition $\phi$ and $x\in M$, one has that $\phi^M(x)=\int\phi^{M_i}(x_i)d\mu$.
\end{lemma}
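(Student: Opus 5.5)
The proof goes by induction on the complexity of $\phi$, following the pattern of the ultraproduct theorem of \L o\'s. Two preliminary remarks come first.

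\emph{Well-definedness.} If $[a_i]=[b_i]$, then $\phi^{M_i}(a_i)=\phi^{M_i}(b_i)$ for $\mu$-almost all $i$. Since every $\phi^{M_i}$ is bounded by ${\sf b}_\phi$, the two finitely additive integrals agree, so the right-hand side does not depend on the representative.

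\emph{Base and connective cases.} For atomic $p$ the claim is the definition of $\v_p$, and for $1$ it is trivial. The cases $\phi+\psi$ and $r\phi$ follow from the linearity of the finitely additive integral for bounded functions (see \cite{Rao}) together with the induction hypothesis.

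The only real work is the case $\Box\phi$. Fix $x=[x_i]$ and write $f_i=\phi^{M_i}$ and $g_i=\bunderline{f_i}=(\Box\phi)^{M_i}$. By the induction hypothesis $\phi^M(y)=\int f_i(y_i)\,d\mu$ for every $y=[y_i]$. We must show
$$\bunderline{\phi^M}(x)=\int g_i(x_i)\,d\mu ,$$
and we use the description $\bunderline{f}(x)=\sup\{\inf_{U}f:\ U\ni x \mbox{ open}\}$ from Section \ref{intro}.

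\emph{Inequality $\geqslant$.} Let $\epsilon>0$. For each $i$, choose an open $U_i\ni x_i$ with $\inf_{U_i}f_i\geqslant g_i(x_i)-\epsilon$; this uses choice. Then $[U_i]$ is a basic open set containing $x$. For any $y=[y_i]\in[U_i]$ we have $y_i\in U_i$ for $\mu$-almost all $i$, hence $f_i(y_i)\geqslant g_i(x_i)-\epsilon$ almost everywhere. Monotonicity of the integral gives $\phi^M(y)\geqslant\int g_i(x_i)\,d\mu-\epsilon$. Therefore $\bunderline{\phi^M}(x)\geqslant\int g_i(x_i)\,d\mu-\epsilon$ for every $\epsilon>0$.

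\emph{Inequality $\leqslant$.} Every open neighbourhood of $x$ contains a basic one, so it suffices to show that for each basic $[U_i]\ni x$ and each $\epsilon>0$,
$$\inf_{y\in[U_i]}\phi^M(y)\leqslant\int g_i(x_i)\,d\mu+\epsilon .$$
Let $A=\{i:\ x_i\in U_i\}$, so $\mu(A)=1$.
For $i\in A$, the set $U_i$ is an open neighbourhood of $x_i$, so $\inf_{U_i}f_i\leqslant g_i(x_i)$. Hence we can pick $y_i\in U_i$ with $f_i(y_i)\leqslant g_i(x_i)+\epsilon$.
For $i\notin A$, put $y_i=x_i$.
Then $y=[y_i]\in[U_i]$, and $f_i(y_i)\leqslant g_i(x_i)+\epsilon$ on the set $A$, which has measure $1$. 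Integrating gives $\phi^M(y)\leqslant\int g_i(x_i)\,d\mu+\epsilon$, as required.

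I expect this second inequality to be the main obstacle. One has to check that the witness points $y_i$, chosen separately in each coordinate, assemble into a single point of the given basic neighbourhood. This works precisely because the topology on $X$ is generated by the sets $[U_i]$, and because null sets of indices can be ignored under $\mu$.
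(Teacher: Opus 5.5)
Your proof is correct and follows essentially the same argument as the paper. Both use induction on $\phi$. In the $\Box$ case, one inequality comes from choosing neighbourhoods $U_i\ni x_i$ coordinatewise and noting that $x\in[U_i]$. The other comes from picking near-infimum witnesses $y_i\in U_i$ inside an arbitrary basic neighbourhood $[U_i]$ and then integrating. Your well-definedness remark, and your explicit handling of the null set $\{i: x_i\notin U_i\}$ by setting $y_i=x_i$ there, simply spell out what the paper compresses into ``we may assume $a_i\in U_i$ for all $i$''.
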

\begin{proof}
The claim is proved by induction on the complexity of $\phi$.
The atomic cases hold by the definition. The cases $r\phi$ and $\phi+\psi$ are clearly true.
Assuming the claim is proved for $\phi$, we prove it for $\Box\phi$. So, we must show that
$$(\Box\phi)^M(x)=\int(\Box\phi)^{M_i}(x_i)d\mu\hspace{15mm} \forall x\in X.$$
In other words, for each fixed $a=[a_i]$, we have to prove that
$$\bunderline{\phi^M}(a)=\int\bunderline{\phi^{M_i}}(a_i)d\mu.$$
Let $\epsilon>0$ be given. Then, there is an open $U_i\ni a_i$ such that for all $x_i\in U_i$
$$\bunderline{\phi^{M_i}}(a_i)-\epsilon\leqslant\phi^{M_i}(x_i).$$
So, $a\in[U_i]$ and for all $x=[x_i]\in[U_i]$
$$\int\bunderline{\phi^{M_i}}(a_i)d\mu-\epsilon\leqslant\int\phi^{M_i}(x_i)d\mu=\phi^M(x).$$
Since $\epsilon>0$ is arbitrary, we conclude that $$\int\bunderline{\phi^{M_i}}(a_i)d\mu\leqslant\bunderline{\phi^M}(a).$$
Conversely, assume $\epsilon>0$ and $[U_i]$ is an arbitrary basic open set containing $a=[a_i]$.
We may assume $a_i\in U_i$ for all $i$. So, there is $x_i\in U_i$ such that
$$\phi^{M_i}(x_i)\leqslant\bunderline{\phi^{M_i}}(a_i)+\epsilon.$$
Then, by integrating, for $x=[x_i]\in[U_i]$ we have that $$\phi^{M}(x)\leqslant\int\bunderline{\phi^{M_i}}(a_i)d\mu+\epsilon.$$
This implies that $$\bunderline{\phi^M}(a)\leqslant\int\bunderline{\phi^{M_i}}(a_i)d\mu.$$
\end{proof}

The \emph{affine closure} of set $\Gamma$ of conditions is the family of all conditions of the form
$$r_1\phi_1+\cdots+r_k\phi_k\leqslant r_1\psi_1+\cdots+r_k\psi_k$$ where $\phi_i\leqslant\psi_i$ belongs to $\Gamma$
and $r_i\geqslant0$. We say $\Gamma$ is \emph{affinely satisfiable} if every condition in its affine closure is satisfiable.

\begin{theorem} (Affine compactness)
Every affinely satisfiable set $\Gamma$ of conditions is satisfiable.
\end{theorem}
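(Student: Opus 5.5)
We use the ultramean construction of the preceding lemma, following the standard proof of affine compactness in affine continuous logic. Let $\Gamma$ be affinely satisfiable. Write each condition in $\Gamma$ as $0\leqslant\theta$ with $\theta=\psi-\phi$; this is harmless since $\phi\leqslant\psi$ and $0\leqslant\psi-\phi$ have the same value at every point of every model. Let $\Omega$ be the set of finite subsets of $\Gamma$. For each $\Delta=\{0\leqslant\theta_1,\dots,0\leqslant\theta_k\}\in\Omega$, we want a model $M_\Delta=(X_\Delta,\v_\Delta)$ and a point $a_\Delta\in X_\Delta$ that satisfy $\Delta$ \emph{on average} rather than exactly. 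Once such data are chosen, we pick an ultracharge $\mu$ on $\Omega$ concentrating on the cofinal sets $\{\Delta:\ \Delta\supseteq\Delta_0\}$ and show that $M=\prod_\mu M_\Delta$ satisfies $\Gamma$ at $a=[a_\Delta]$. By the lemma above, $\theta^M(a)=\int\theta^{M_\Delta}(a_\Delta)\,d\mu$.

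\textbf{Key step: a convexity (minimax) argument for a finite $\Delta$.} Consider the set
$$C=\{(\theta_1^N(y),\dots,\theta_k^N(y)):\ N \mbox{ a model},\ y\in N\}\subseteq\Rn^k .$$
It is bounded, since $|\theta_j^N|\leqslant{\sf b}_{\theta_j}$. Affine satisfiability says that for every $r_1,\dots,r_k\geqslant0$ some point of $C$ satisfies $\sum r_j c_j\geqslant0$. Hence $\sup_{c\in C}\sum r_jc_j\geqslant0$ for every $r\geqslant0$. By the separating hyperplane theorem, the closed convex hull of $C$ therefore meets the closed positive orthant. Concretely, for each $\epsilon>0$ there are finitely many pairs $(N_l,y_l)$ and convex weights $\lambda_l$ with
$$\sum_l\lambda_l\,\theta_j^{N_l}(y_l)\geqslant-\epsilon\quad\mbox{for all } j\leqslant k .$$
We then absorb these finite convex combinations into the ultramean itself. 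Take the index set to be pairs $(\Delta,l)$, and give the block of $\Delta$ the weights $\lambda_l$. This works because ultracharges can be built as limits of finitely supported probability measures: define $\mu(A)=\lim_{\mathcal U}\sum_{(\Delta,l)\in A}\lambda^\Delta_l$ along an ultrafilter $\mathcal U$ on $\Omega$ containing all cones $\{\Delta\supseteq\Delta_0\}$. Here we let $\epsilon=\epsilon_\Delta\to0$, for instance $\epsilon_\Delta=1/|\Delta|$.

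\textbf{Verification.} Fix $0\leqslant\theta$ in $\Gamma$. The integral against $\mu$ equals the $\mathcal U$-limit of the finite averages $\sum_l\lambda^\Delta_l\theta^{N_{\Delta,l}}(y_{\Delta,l})$, by finite additivity, boundedness of $\theta$, and the definition of $\mu$. For every $\Delta$ containing the condition $0\leqslant\theta$, that average is $\geqslant-\epsilon_\Delta$. Since $\mathcal U$ contains the cone above $\{0\leqslant\theta\}$ and $\epsilon_\Delta\to0$ along $\mathcal U$, the limit is $\geqslant0$. By the lemma above, $M\vDash_a\Gamma$.

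\textbf{Main obstacle.} The delicate point is the separation step. We must justify the passage from ``for each $r\geqslant0$ there is a point with $\sum r_jc_j\geqslant0$'' to ``some convex combination is nearly in the orthant.'' If no point of $\overline{\mathrm{co}}(C)$ lies in the orthant, the compact convex set $\overline{\mathrm{co}}(C)$ and the closed orthant are disjoint closed convex sets, one of them compact. They can then be strictly separated by a functional $r$. That $r$ must be $\geqslant0$, because $\sum r_jc_j$ is bounded below on the orthant. Strict separation then gives $\sum r_jc_j<0$ on all of $C$, contradicting satisfiability of $\sum r_j\theta_j\geqslant0$. A secondary check is that the ultramean lemma applies verbatim to index sets carrying such limit ultracharges, which it does, since the lemma was proved for an arbitrary ultracharge.
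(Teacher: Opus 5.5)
Your proof is correct up to one small repair, noted at the end. It takes a genuinely different route from the paper.

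\textbf{The paper's route.} It first enlarges $\Gamma$ by Zorn's lemma to a maximal affinely satisfiable set and forms the sublinear functional $Q(\phi)=\inf\{r:\ \phi\leqslant r\in\Gamma\}$. Hahn--Banach gives a linear $T\leqslant Q$ with $T(1)=1$. For each proposition $\phi$ it picks one witness $a_\phi$ with $\phi^{M_\phi}(a_\phi)\leqslant Q(\phi)$. The ultracharge on the set of propositions then comes from extending $T$ to $\mathbf{C}_b(\Omega)$ (Kantorovich) and representing the extension by a charge (Riesz). The ultramean point satisfies $\phi^M(a)=T(\phi)\leqslant Q(\phi)$ for every $\phi$.

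\textbf{Your route.} You work with finite fragments of $\Gamma$. The affine hypothesis enters exactly through finite-dimensional separation in $\Rn^k$: the separating functional is forced to be $\geqslant0$, so it corresponds to an element of the affine closure. Both the convex combinations and the errors $\epsilon_\Delta$ are then absorbed into the explicit charge $\mu(A)=\lim_{\mathcal U}\sum_{(\Delta,l)\in A}\lambda^\Delta_l$.

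\textbf{What each buys.} Your argument is more elementary and self-contained. It avoids Zorn's lemma on sets of conditions and the infinite-dimensional extension and representation theorems. It also avoids two auxiliary facts the paper's argument relies on. The first is that $\phi\leqslant Q(\phi)$ itself, not merely $\phi\leqslant Q(\phi)+\epsilon$, belongs to the maximal set. The second is that $T$ is positive for the pointwise order on the chosen witnesses, as the Kantorovich extension requires. You need neither, because the $\mathcal U$-limit handles the approximation directly. In exchange, the paper's route realizes a prescribed positive linear functional $T$ as the full affine type of the point. That is what connects with its later identification of points of the canonical model with positive linear functionals on $V$.

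\textbf{The repair.} With $\epsilon_\Delta=1/|\Delta|$, the claim that $\epsilon_\Delta\to0$ along $\mathcal U$ fails when $\Gamma$ is finite. In that case $\mathcal U$ is principal at $\Gamma$, and you only get $\theta^M(a)\geqslant-1/|\Gamma|$. Either of two fixes works. You can pad $\Gamma$ with the valid conditions $0\leqslant n$ for $n\geqslant1$, which does not affect affine satisfiability. Alternatively, index by pairs $(\Delta,n)$ with $\epsilon=1/n$, and take an ultrafilter containing all sets $\{(\Delta,n):\ \Delta\supseteq\Delta_0,\ n\geqslant n_0\}$.
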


\begin{proof}
Let $\Gamma$ be an affinely satisfiable theory which we may further assume it is maximal with this property.
For each $\phi$ set $$Q(\phi)=\inf\{r:\phi\leqslant r\in\Gamma\}.$$
By maximality, $Q$ is defined for every $\phi$ and it is sublinear, i.e.
$$Q(\phi+\psi)\leqslant Q(\phi)+Q(\psi),\hspace{16mm} Q(r\phi)=rQ(\phi)\ \ \ \ \mbox{for}\ r\geqslant0.$$
Moreover, $\phi\leqslant Q(\phi)$ belongs to $\Gamma$. Let $T_0$ be the identity map on the linear subspace $\Rn\subseteq V$.
Then, $T_0\leqslant Q$ on $\Rn$ and by the Hahn-Banach extension theorem (\cite{Aliprantis-Inf}, Th 8.30),
$T_0$ extends to a linear map $T$ on $V$ such that $T(\phi)\leqslant Q(\phi)$ for every $\phi$.
Note that $T$ is positive. In particular, if $\vDash\phi\leqslant0$ then $\phi\leqslant0\in\Gamma$
and hence $T(\phi)\leqslant Q(\phi)\leqslant0$.

Our favorite index set $\Omega$ is the family of all propositions (not to be confused with $V$).
Since, $\phi\leqslant Q(\phi)$ belongs to $\Gamma$, for each $i=\phi$ there is a model $M_i=(X_i,\v_i)$
and $a_i\in X_i$ such that $\phi^{M_i}(a_i)\leqslant Q(\phi)$.
Put the discrete topology on $\Omega$. Then, we may write $\Rn\subseteq V\subseteq\mathbf{C}_b(\Omega)$
if we identify $\phi\in V$ with the map $i\mapsto\phi^{M_i}$.
Since $V$ majorizes $\mathbf{C}_b(\Omega)$ (as every $f\in\mathbf{C}_b(\Omega)$ is bounded),
by the Kantorovich extension theorem \cite{Aliprantis-Inf}, $T$ is extended to a positive linear functional
$\bar{T}$ on $\mathbf{C}_b(\Omega)$.
By (a variant of) Riesz representation theorem \cite{Aliprantis-Inf} there is a (maximal) probability charge $\mu$ on $\Omega$
such that for every $\phi$ $$\bar{T}(\phi)=\int\phi^{M_i}d\mu.$$
Let $M=\prod_{\mu}M_i$ and $a=[a_i]$. Then, for each proposition $\phi$
$$\phi^M(a)=\int\phi^{M_i}(a_i)d\mu=T(\phi)\leqslant Q(\phi).$$
Clearly, if $\phi\leqslant\psi\in\Gamma$ then $Q(\phi-\psi)\leqslant0$ and hence $\phi^M(a)\leqslant\psi^M(a)$.
In particular, $M\vDash_a\Gamma$.
\end{proof}

An easy consequence of affine completeness and affine compactness theorems is that if both $\Gamma,0\leqslant\phi$ and $\Gamma,\phi\leqslant0$
are consistent, then so is $\Gamma,\phi=0$. The following proposition justifies that AML is in some sense continuous.

\begin{lemma} \label{proofs}
Let $S$ be a set of conditions of the form $0\leqslant\sigma$.

\emph{(i)} If $\Gamma\cup S\vDash0\leqslant\theta$, then for each $\epsilon>0$ there exists
$0\leqslant\sigma$ in the affine closure of $S$ such that $\Gamma\vDash\sigma\leqslant\theta+\epsilon$.
In particular, $\Gamma,0\leqslant\sigma\vDash0\leqslant\theta+\epsilon$.

\emph{(ii)} If $\Gamma,0\leqslant\sigma\vDash0\leqslant\theta$, for each $\epsilon>0$ there exists
$\delta>0$ such that $\Gamma,-\delta\leqslant\sigma\vDash-\epsilon\leqslant\theta$.
\end{lemma}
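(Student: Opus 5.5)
We plan to prove (i) on the syntactic side and move back to semantics by soundness. Part (ii) will then be the special case of (i) with $S$ a single condition.

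For (i), fix $\epsilon>0$. Since $\Gamma\cup S\vDash0\leqslant\theta$, the Approximate completeness theorem gives $\Gamma\cup S\vdash-\epsilon\leqslant\theta$. By Lemma \ref{deduction1}(iii), some finite subset $\Gamma_0\cup\{0\leqslant\sigma_1,\dots,0\leqslant\sigma_k\}$ already proves $-\epsilon\leqslant\theta$, with $\Gamma_0\subseteq\Gamma$ and each $0\leqslant\sigma_j\in S$. By Lemma \ref{deduction1}(i) we may enlarge $\Gamma_0$ back to $\Gamma$. We then apply the iterated form of Lemma \ref{deduction lemma} stated after its proof. This yields $r_1,\dots,r_k\geqslant0$ with
$$\Gamma\vdash\ \sum_j r_j\sigma_j-\epsilon\leqslant\theta .$$
Set $\sigma=\sum_j r_j\sigma_j$. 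Then $0\leqslant\sigma$ lies in the affine closure of $S$, and $\Gamma\vdash\sigma\leqslant\theta+\epsilon$ after adding $\epsilon$ to both sides with (R1) and simplifying with the axioms.

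To conclude $\Gamma\vDash\sigma\leqslant\theta+\epsilon$, we need soundness for derivations from hypotheses, not only Proposition \ref{soundness}. We prove it by induction on $n$ in $\Gamma\vdash_n$. Fix a model $M$ and a point $x$ with $M\vDash_x\Gamma$. Every condition with $\Vdash$ holds at $x$ by Proposition \ref{soundness}, and every member of $\Gamma$ holds at $x$ by assumption. The rules (R1)--(R3) are pointwise valid, since they only involve addition, nonnegative scaling and transitivity of real numbers. Note that (R4) is absent from $\vdash$, which is exactly why this local soundness holds. The ``in particular'' clause is then immediate: at any $x$ satisfying $\Gamma$ and $0\leqslant\sigma$ we have $0\leqslant\sigma^M(x)\leqslant\theta^M(x)+\epsilon$.

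For (ii), apply (i) with $S=\{0\leqslant\sigma\}$ and with $\epsilon/2$ in place of $\epsilon$. The affine closure of $S$ consists of the conditions $0\leqslant r\sigma$ with $r\geqslant0$. Hence $\Gamma\vDash r\sigma\leqslant\theta+\epsilon/2$ for some $r\geqslant0$. If $r=0$, then $\Gamma\vDash-\epsilon/2\leqslant\theta$ and any $\delta>0$ works. If $r>0$, take $\delta=\epsilon/(2r)$. At any point satisfying $\Gamma$ and $-\delta\leqslant\sigma$ we get $\theta\geqslant r\sigma-\epsilon/2\geqslant-r\delta-\epsilon/2=-\epsilon$.

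The main point needing care is the passage between $\vDash$ and $\vdash$ when $S$ is infinite. Approximate completeness is stated for an arbitrary theory, so it applies to $\Gamma\cup S$ directly, and the finiteness lemma then reduces to finitely many $\sigma_j$. One must check that the iterated deduction lemma allows $\Gamma$ to remain as a side theory while only the finitely many conditions from $S$ are discharged. This is exactly how that lemma is stated, so no further work should be needed there.
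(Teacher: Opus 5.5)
Your proof is correct, and your part (ii) is exactly the paper's argument: apply (i) to $S=\{0\leqslant\sigma\}$ with $\epsilon/2$, and take $\delta=\epsilon/(2r)$ when $r>0$. For (i), however, you take a genuinely different route.

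The paper proves (i) semantically, by contradiction. If some $\epsilon$ admits no such $\sigma$, then $\Gamma\cup\{\theta+\epsilon\leqslant\sigma\}$ is satisfiable for every $0\leqslant\sigma$ in the affine closure of $S$. Hence $\Gamma\cup S\cup\{\theta+\epsilon\leqslant0\}$ is affinely satisfiable, and the affine compactness theorem (the ultramean construction) produces a point satisfying $\Gamma\cup S$ with $\theta\leqslant-\epsilon$, contradicting $\Gamma\cup S\vDash0\leqslant\theta$.

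You instead go through the proof theory: approximate completeness (so the canonical model and truth lemma), finiteness of derivations, the iterated deduction lemma, and a local soundness principle for $\vdash$. The paper never states that soundness principle, but you justify it correctly by noting that (R4) is only ever applied to theorems.

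What your route buys:
\begin{itemize}
\item A sharper conclusion. Your $\sigma$ is an explicit nonnegative combination of finitely many members of $S$, and $\sigma\leqslant\theta+\epsilon$ is derivable from $\Gamma$, not merely entailed by it.
\item It avoids a step the paper leaves implicit. The paper needs \emph{every} member of the affine closure of $\Gamma\cup S\cup\{\theta+\epsilon\leqslant0\}$ to be satisfiable. This includes combinations where $\theta+\epsilon\leqslant0$ has coefficient $0$; for those the hypothesis only yields approximate satisfiability, so a further limiting argument is needed.
\end{itemize}

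What the paper's route buys: it is purely semantic. It shows the lemma follows from affine compactness alone, without touching the canonical-model machinery.

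One trivial detail in your write-up: approximate completeness gives $-\frac1n\leqslant\theta$, so you should choose $\frac1n\leqslant\epsilon$ and weaken using (A1) and (R3).
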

\begin{proof} (i): Otherwise, there exists $\epsilon>0$ such that
for every $0\leqslant\sigma$ in the affine closure of $S$, the theory
$\Gamma\cup\{\theta+\epsilon\leqslant\sigma\}$ is satisfiable.
This implies that $\Gamma\cup S\cup\{\theta+\epsilon\leqslant0\}$ is affinely satisfiable.\
(ii): By (i), there exists $r\geqslant0$
such that $\Gamma\vDash r\sigma\leqslant\theta+\frac{\epsilon}{2}$.
If $r=0$ we have that $\Gamma\vDash-\epsilon\leqslant\theta$.
Otherwise, $\Gamma,\frac{-\epsilon}{2r}\leqslant\sigma\vDash-\epsilon\leqslant\theta$.
\end{proof}

Let $V$ be the ordered vector space of propositions where we identify $\phi$, $\psi$ if
$\vdash\phi=\psi$ and we define $\phi\leqslant\psi$ if $\vdash\phi\leqslant\psi$.
Also, a norm is defined on $V$ by setting $$\|\phi\|=\sup\{|\phi^M(x)|:\ M=(X,\v)\ \textrm{is a model and}\ x\in X\}.$$
Thus, $V$ is a partially ordered normed vector space.
Any maximal consistent theory $x\in X$ determines a positive linear map $\phi\mapsto\phi^x$ from $V$ to $\Rn$, i.e.
$x(1)=1$ and $$(r\phi+\psi)^x=r\phi^x+\psi^x$$
$$0\leqslant\phi \ \ \Longrightarrow \ \ 0\leqslant\phi^x.$$

\begin{lemma}
Let $T:V\rightarrow\Rn$ be a positive linear functional such that $T(1)=1$.
Then, the set of conditions of the form $\phi=T(\phi)$ is satisfiable.
\end{lemma}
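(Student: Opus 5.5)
The plan is to show that $\Gamma_T=\{\phi=T(\phi):\ \phi\in V\}$ is consistent. The Completeness theorem then yields a maximal consistent $x$ containing $\Gamma_T$, and the canonical model satisfies $\Gamma_T$ at $x$ by the Truth lemma. Alternatively, one can aim directly at affine satisfiability and invoke Affine compactness. The proof-theoretic route is more self-contained, so I will follow it.

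\emph{Reduction.} Suppose $\Gamma_T$ is inconsistent. By Lemma \ref{deduction1}(iii), some finite subset $\{\phi_1=T(\phi_1),\dots,\phi_k=T(\phi_k)\}$ already proves $1\leqslant 0$. Each equality $\phi_i=T(\phi_i)$ is the pair of conditions
\[
0\leqslant \phi_i-T(\phi_i),\qquad 0\leqslant T(\phi_i)-\phi_i ,
\]
and both are (provably equivalent to) conditions of the form $0\leqslant\theta$. So the iterated form of Lemma \ref{deduction lemma} (the remark after it, with $\Gamma=\emptyset$) gives reals $r_i,s_i\geqslant0$ with
\[
\vdash\ \sum_i (r_i-s_i)\bigl(\phi_i-T(\phi_i)\bigr)+1\leqslant 0 .
\]
Set $\theta=\sum_i (r_i-s_i)(\phi_i-T(\phi_i))$. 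Then $\vdash \theta\leqslant -1$, so $-1-\theta\geqslant0$ in the ordered space $V$.

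\emph{Contradiction.} $T$ is linear, $T(1)=1$, and each $T(\phi_i)$ is a real constant $c$ with $T(c)=c$. Hence $T(\phi_i-T(\phi_i))=0$ for every $i$, and so $T(\theta)=0$. Positivity of $T$ applied to $-1-\theta\geqslant0$ gives $T(-1-\theta)=-1-T(\theta)=-1\geqslant0$, which is false. Therefore $\Gamma_T$ is consistent, and completeness finishes the proof.

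\emph{Main obstacle.} The delicate point is verifying that $T$ interacts correctly with provable inequalities. Since $V$ is defined as a quotient by provable equality with the order given by $\vdash$, the deduction-lemma output must be read as a genuine inequality in $V$ between the classes of $\theta$ and $-1$. This uses axioms (A1)--(A9), which make $V$ an ordered vector space in which real constants embed as the multiples of $1$. Checking that real constants are identified with $T$-fixed scalars, i.e.\ $T(r\cdot 1)=r$, is routine but essential.

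If the deduction-lemma route proves awkward, I would fall back on semantics. For any finite subfamily, Weak completeness together with positivity of $T$ shows that no nonnegative combination of the conditions $\pm(\phi_i-T(\phi_i))\geqslant0$ can be uniformly $\leqslant-1$. This gives affine satisfiability, and Affine compactness then concludes.
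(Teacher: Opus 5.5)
Your proof is correct, but it follows a different route from the paper's. The paper argues semantically and one condition at a time. If $\phi\leqslant T(\phi)$ were unsatisfiable, then $\vDash T(\phi)+\epsilon\leqslant\phi$ for some $\epsilon>0$. Weak completeness converts this into provable inequalities $T(\phi)+\epsilon-\frac1n\leqslant\phi$, and positivity of $T$ then gives $T(\phi)+\epsilon-\frac1n\leqslant T(\phi)$, which is absurd. The same argument handles $T(\phi)\leqslant\phi$.

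You instead handle all of $\Gamma_T$ at once, syntactically. Finiteness of proofs and the iterated deduction lemma reduce an inconsistency to one provable inequality $\theta+1\leqslant0$ with $T(\theta)=0$, which positivity forbids; completeness then supplies the model. The underlying mechanism is the same in both: a provable strict inequality on whose two sides $T$ agrees contradicts positivity.

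What your version buys is joint satisfiability with no further work. The paper's argument, as written, only shows that each individual condition is satisfiable. To get the whole set one must add that the affine closure of $\Gamma_T$ consists, up to equivalence, of conditions $\chi\leqslant T(\chi)$, and then invoke affine compactness with its Hahn--Banach/Riesz/ultramean machinery. The paper also tacitly uses completeness or compactness to get the uniform gap $\epsilon$ from unsatisfiability.

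Your argument bypasses affine compactness entirely. It also realizes $\Gamma_T$ at a point of the canonical model, since any maximal consistent $x\supseteq\Gamma_T$ has $\phi^x=T(\phi)$ for every $\phi$. That is exactly the correspondence between $X$ and normalized positive functionals used in the paper's closing paragraph. Your semantic fallback is essentially the paper's argument with the affine compactness step made explicit.
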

\begin{proof}
We claim that every condition $\phi\leqslant T(\phi)$ is satisfiable. Suppose not.
Then, for some $\epsilon>0$ one has that $\vDash T(\phi)+\epsilon\leqslant\phi$.
By Theorem \ref{Weak completeness} and positivity of $T$, we have that $T(\phi)+\epsilon\leqslant T(\phi)$.
This is a contradiction. Similarly, $T(\phi)\leqslant\phi$ and hence $\phi=T(\phi)$ is satisfiable.
\end{proof}

We conclude that every $x\in X$ corresponds to a positive linear map $T:V\rightarrow\Rn$.
Note that if $T_1,T_2$ are positive linear, then so is $\lambda T_1+(1-\lambda)T_2$ for every $\lambda\in[0,1]$.
In fact, the family of positive linear maps on $V$ is a compact convex subset of $V^*$ (the continuous dual of $V$).
The topology of $X$ is however weaker than the weak-star topology of $V^*$.
It follows that the canonical model $M=(X,\v)$ is compact convex (but not necessarily Hausdorff).

\end{document}